\newtheorem{thm}{Theorem}[section]
\newtheorem{prop}[thm]{Proposition}
\newtheorem{lem}[thm]{Lemma}
\newtheorem{cor}[thm]{Corollary}
\newtheorem{remark}{Remark}
\newcommand\dO{{\rm d}\Omega}
\newcommand\E{{\mathcal P}}
\newcommand\T{{ T}}
\newcommand\Po{{\mathbb P}}
\newcommand\R{{\mathbb R}}
\newcommand\I{{\mathbb I}}
\newcommand\PP{{\mathcal P}}
\renewcommand\S{{\mathcal S}}
\newcommand\nn{{\bf n}}
\newcommand\calB{{\mathcal B}}
\newcommand\calO{{\mathcal O}}
\newcommand\calV{{\mathcal V}}
\newcommand\kd{k_\Delta}
\newcommand\ecurvo{\gamma}
\newcommand\Th{{\mathcal T}_h}
\newcommand\Ro{\rho}
\newcommand\uh{u_h}
\newcommand\uI{u_I}
\newcommand\ghD{\overline{g_D}}
\newcommand\up{u_{\pi}}
\newcommand\tri{|\!|\!|}
\newcommand\eps{\varepsilon}
\newcommand\calP{{\mathcal P}}
\newcommand\varphib{\boldsymbol{\varphi}}
\newcommand\pippof{{\kappa}}
\newcommand\pippoi{{\kappa_i}}
\newcommand\pippoz{{\kappa_0}}
\newcommand\gu{{\textsc{u}}}
\newcommand\gv{{\textsc{v}}}
\newcommand\gw{{\textsc{w}}}
\newcommand\fu{\hat{\gu}}
\newcommand\fuI{\fu_I}
\newcommand\fv{\hat{\gv}}
\newcommand\fw{\hat{\gw}}
\newcommand\fuh{{\hat{\gu}_h}}
\newcommand\bdelta{{\mbox{\small $\Theta$}}}
\newcommand\bdeltah{\hat{\bdelta}}
\newcommand\deltas{{{\theta}}}
\newcommand\newthetas{{\chi}}
\begin{document}

\markboth{Beir\~ao da Veiga, Brezzi, Cangiani, Manzini, Marini, Russo}{Basic Principles of Virtual Element Methods}

\title{Virtual Elements and curved edges}

\author{L. BEIR\~AO da VEIGA$^{1,2}$}

\thanks{$^1$Dipartimento di Matematica e Applicazioni, Universit\`a di Milano--Bicocca, Via Cozzi 53, I-20153, Milano 
(Italy); lourenco.beirao@unimib.it, alessandro.russo@unimib.it}

\author{F. BREZZI$^{2}$}

\thanks{$^2$IMATI-CNR, Via Ferrata 1, 27100 Pavia (Italy); brezzi@imati.cnr.it}

\author{L.D. MARINI$^{3,2}$}

\thanks{$^3$Dipartimento di Matematica, Universit\`a di Pavia,
Via Ferrata 1, 27100 Pavia (Italy); marini@imati.cnr.it}

\author{A. RUSSO$^{1,2}$}

\maketitle

%
%
%
%

\section{Introduction}
In this paper we tackle the problem of constructing {\it conforming} Virtual Element spaces on polygons with curved edges.  Apart from the obvious convenience in treating computational domains with curved boundaries with a better accuracy, the interest in using polygons with curved edges could arise in various circumstances,
as when a singularity of the solution near the boundary is expected (e.g. in Fig.~\ref{Anelli2}), or in the presence 
of mixtures of different materials or rough boundaries (as in 
Fig.~\ref{Veletette}) when we are still far from the homogenized limit, and so on.

The treatment of curved edges with {\it nonconforming} approximations
(as in non-conforming VEMs and HHO methods, see \cite{VEMnc}, \cite{HHO-curved}) is relatively easier,
since the traces of the elements of the Virtual Element spaces do not need to be continuous at the endpoints of edges (and in particular at the endpoints of the {\it curved} edges).

For the sake of simplicity here we will just treat the case of a {\it single curved edge per element}, but the general philosophy can be easily applied to the case of several curved edges. Moreover we assume that the curved edge is given in parametric form (say: $x=x(t)$, $y=y(t)$ for $t\in [t_0, t_1]$).                                          

Conforming Virtual Element spaces on polygons with curved edges (still given in parametric form) have already been introduced in \cite{rozzi}, using, on the curved edges, functions that are {\it polynomials in the parameter{ $t$}}. Such an approach has several advantages (including the simplicity of the definition and the adaptability to more general situations).

On the other hand, 
one of the advantages of the present approach, compared with
others like the one in \cite{rozzi}, is that the local VEM spaces contain all the polynomials of a given degree (and this provides the {\it patch test}). Moreover the result will not depend (apart from round-off errors) from the choice of the parametrization of the curve.

For another different approach and use of Virtual Elements with curved edges we refer also to \cite{VEM-curv-Bertol}, where the authors consider a polygonal approximation $\Omega_h$ (with straight edges) of the original domain $\Omega$ (with curved edges). The discrete solution (say, $u_h$) is originally defined in $\Omega_h$. But then a smart correction $\tilde{u}_h$ (using a suitable approximation of high order normal derivatives of $u_h$) 
is introduced, and used to define a problem in $\Omega$ (very much in the spirit of \cite{Bramble-Dupont-Thomee}, and then \cite{Bramble-King94},  \cite{Bramble-King97}).

Another approach to deal with conforming Virtual Elements with  curved edges has been proposed in \cite{Ovall}. This last approach is very similar to the present one (although the two have been developed independently). The (minor!) difference between the two treatments is tricky, and we describe it in detail later on, in Remark \ref{cfrovall}: we anticipate here that the one in  in \cite{Ovall} is somehow {\it simpler}, while we think that the one presented here is {\it safer} ( = less exposed to the possible damages of round-off errors).

We also point out that more traditional ways of dealing with curved edges in Finite Elements (typically, using iso-parametric elements) are based on the property that: {\it we have a fixed curve
that tends to be (locally!) straight when the discretization gets finer and finer}. Here we want to be able, as well, to work somewhat {\it in between}: when the discontinuities in the material properties (or the roughness of the boundaries) is still far from the homogenized limit but the curved boundaries are far from being locally flat. A numerical example in this direction will be given in the last section.

\begin{figure}[!htb]
\begin{center}
\includegraphics[height=0.45\textwidth]{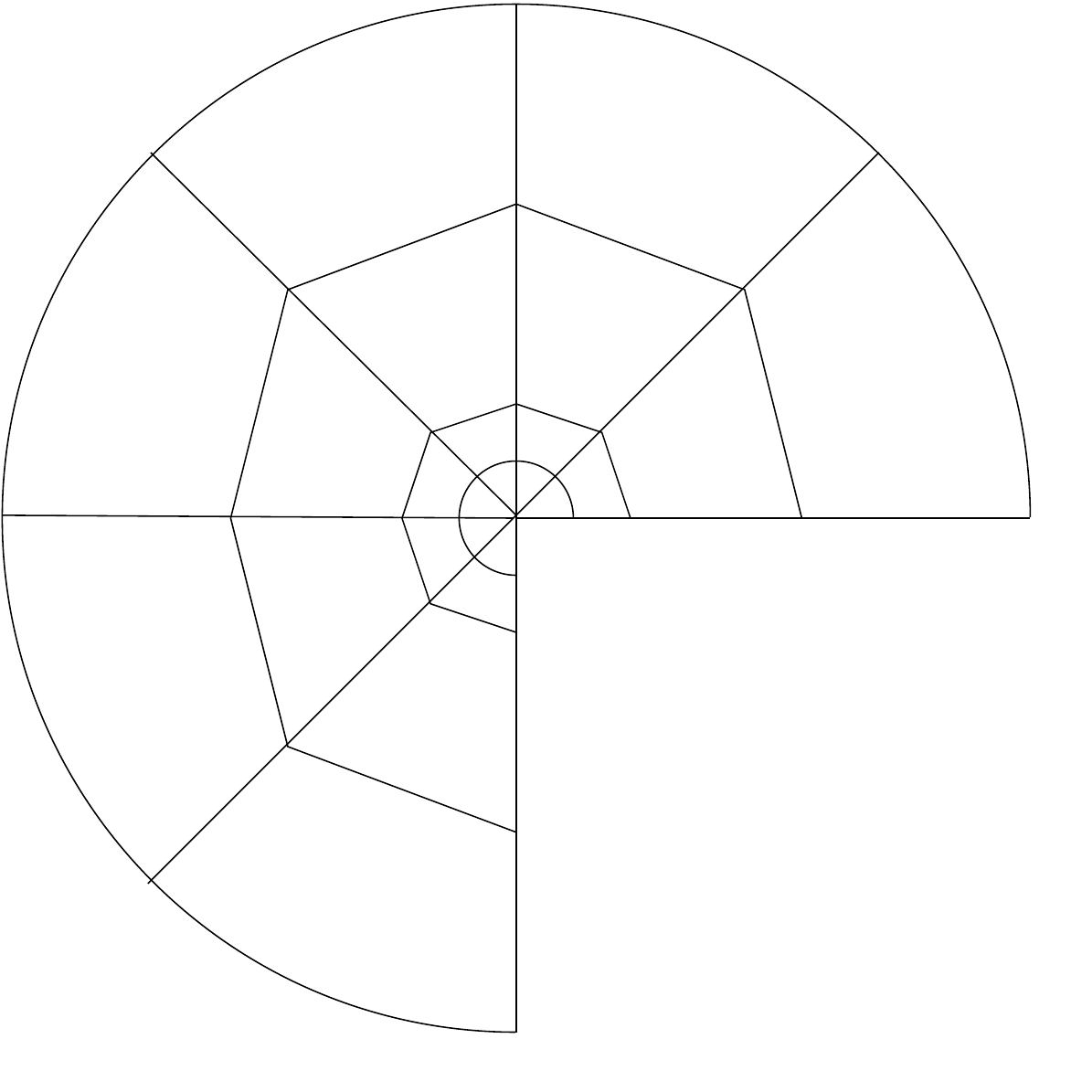} 
\end{center}
\caption{Taking into account a re-entrant corner}
\label{Anelli2}
\end{figure}

\begin{figure}[!htb]
\begin{center}
\includegraphics[height=0.35\textwidth]{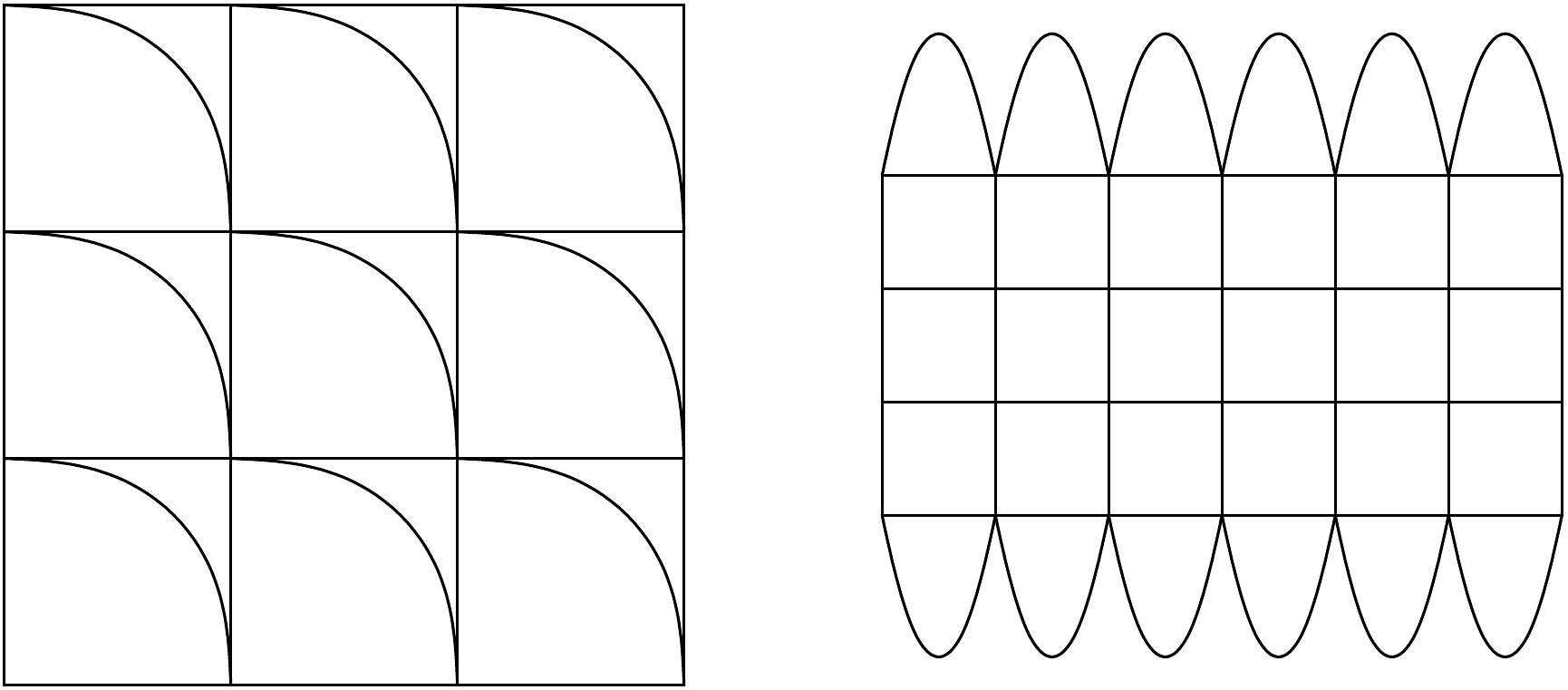} 
\end{center}
\caption{Other decompositions}
\label{Veletette}
\end{figure}

 In this presentation, it will be convenient to  distinguish  different  types of curved edges: curved edges internal to the computational domain (typically, to be used in the presence of two different materials separated by a curved interface), curved edges belonging to a part of the boundary where Dirichlet conditions are imposed, and  
curved edges belonging to a part of the boundary where Neumann or Robin boundary conditions are imposed.


We point out that we are ready to accept that an edge  which a-priori is declared to be be curved might, by chance, be actually straight. On the other hand there might be parts of the boundaries that are "known to be straight", and these will be treated as straight from the very beginning. Hence, to summarize, we will assume that there are: {\it i)} edges that are declared as "straight" (and {\bf must} be straight), and  {\it ii)} edges that are declared as "curved" (and {\bf might} be either curved or straight). 

An outline of the paper is as follows: In the next Section \ref{model} we will start  recalling some basic notation on polynomial spaces, and then we will present the model problem to be used in the following discussion: essentially, 
the simple linear elliptic problem $-{\rm div}(\pippof\nabla u)=f$   in a domain $\Omega$, with Dirichlet boundary conditions on one part of the boundary and Robin boundary condition on the remaining part. We will also assume, for simplicity, that the material coefficient $\pippof$ is piece-wise constant, assuming one constant value in a sub-domain $\Omega_i\subset\subset\Omega$ and another constant value in the remaining part.  In Section \ref{locspa} we will introduce the Virtual Element local spaces to be used in the sub-domains of the decomposition. In the following Sections \ref{locstiff} and \ref{glostiff} we will then introduce the {\it local} stiffness matrices and the {\it global} ones, respectively.  Finally, in Section \ref{dispro} we will introduce the discretized problem and present the corresponding bounds of the error in terms of the interpolation errors that, in turn, will be discussed in Section \ref{interr}.


The whole treatment here has been concentrated on simple linear elliptic equations in 2 dimensions. Needless to say, there is a lot of interesting work still to be done, in order to extend it to more complex problems, much more interesting for applications.
This, apart from the natural need to deal with the three-dimensional case, includes obviously all the subjects where Virtual Elements proved to be a viable and useful discretization method, as for instance: linear elasticity problems, \cite{VEM-elasticity} \cite{HR-Virtual-Plate}; incompressible and nearly incompressible materials, \cite{ABMV14}, \cite{NewStokes}, \cite{Stokes:divfree};
    plate bending and more generally fourth order problems,
\cite{Brezzi:Marini:plates}, \cite{ABSVu}
   \cite{cinesi_plates}; electromagnetic problems 
\cite{lowest-max3}, \cite{max3} and wave propagation, 
\cite{Acoustic},  \cite{Helmo-PPR},\cite{Acoustic},
and so on. Moreover, the use of elements with curved boundary to deal with contact problems (already treated successfully for VEM discretizations with straight edges e.g. in \cite{Wriggers-1}, \cite{Wriggers-2} ) is a surely challenging topic full of potential troubles and appealing perspectives.

Similarly in mesh adaptation for dealing with singularity     (as  in  \cite{Arxiv-hp-corner}), the use of curved edges could also be very attractive.

On the other hand, more theoretical aspects could also be profitably extended, as the use of $H({\rm div})$ and $H({\rm curl})$ elements
(see e.g. {super-misti},as well as applications to problems with variable coefficients (as e.g.in \cite{variable-primal}),
     or the finer aspects of interpolation errors (see \cite{Brenner}, \cite{BLR-stab}).
 
Finally, the Serendipity versions (see e.g. \cite{SERE-nod}, \cite{SERE-mix}) of all these Virtual Element spaces is possibly the most natural extension of the present work, and the basic ideas for doing that are sketched in Remark \ref{XSere}.

\section{The continuous problem}\label{model}

\subsection{Notation} Throughout the paper, if $d$ (= {\it dimension}) is an integer $\ge 1$ and $k$ is an integer, we will denote by 
$\Po_{k,d}$ the space of polynomials of degree $\le k$ in $d$
dimensions. As usual, $\Po_{-1}=0$.
In practice, we  will consider here only the cases
$d=1$ and $d=2$.
We will denote by $\pi^d_k$ the dimension of $\Po_k$ in 
$d$ dimensions, so that for $k\ge 0$ we have
\begin{equation}
\pi^1_k=k+1\qquad\mbox{ and }\qquad\pi^2_k=(k+1)(k+2)/2 .
\end{equation}
In most cases, when no confusion can occur, we will just use $\pi_{k}$ instead of $\pi^1_k$ or $\pi^2_k$. 

If $\calO$ is a domain in $\R^d$ and
$k$ is an integer, we will donote by $\Po_k(\calO)$ the space of the restrictions to $\calO$ of the polynomials
of $\Po_k$. With an abuse of notation, we will often use
simply $\Po_k$ instead of $\Po_k(\calO)$ when no confusion
is likely to occur.

For a $1$-d manifold $\Gamma$ we denote 
by $|\Gamma|$ its length. For an open $\omega\subset \R^2$ we  denote by $|\omega|$ its area.
\smallskip

Moreover, we will denote by $\Pi^{0,\calO}_k$ the  orthogonal-projection operator, in $L^2(\calO)$, onto $\Po_k(\calO)$, defined, as usual, for every $v\in L^2(\calO)$, by
\begin{equation}\label{defPros}
\Pi^{0,\calO}_k(v)\in \Po_k(\calO)\qquad\mbox{{\it and }}\qquad
\int_{\calO}(v-\Pi^{0,\calO}_k(v))\,q_k\,{\rm d}\calO=0\quad
\forall q_k \in \Po_k(\calO). 
\end{equation}

Finally, given   a function $\psi\in L^2(\calO)$ and an integer $s\ge 0$, we recall that the {\it moments of order $\le s$ of 
$\psi$ on $\calO$} are defined as: 
\begin{equation}
\int_{\calO}\psi\,q_s\,{\rm d}\calO\qquad\mbox{ for }q_s\in \Po_s(\calO).
\end{equation}
Hence {\it to assign the moments of $\psi$ up to the order $s$ on $\calO$} will amount to $\pi^d_s$ conditions.
 Tipically this will be used when these moments are considered as {\it degrees of freedom}. Then, we will take in $\Po_s$ a basis $\{q_i\}$ such that $\|q_i\|_{L^1} \simeq 1$.

Throughout the paper we will follow the common notation for scalar products, norms, and seminorms. In particular, 
$(v,w)_{0,\calO} \mbox{ (sometimes, just }(v,w)_{0}  )$ and $\|v\|_{0,\calO} \mbox{ (sometimes, just } \|v\|_0)$ will denote the $L^2$ scalar product and norm, $|v|_{1,\calO} \mbox{ (sometimes, just } |v|_1)$ and $\|v\|_{1,\calO} \mbox{ (sometimes, just  } \|v\|_1^2)$ the $H^1$ semi-norm and norm.

%

\subsection{The continuous problem}

Let $\Omega$ be a bounded open subset of $\R^2$ with Lipschitz continuous boundary
and let $\Omega_i$ be a subset of $\Omega$ with a  Lipschitz continuous boundary, such that (for simplicity) $\overline{\Omega_i}\subset
\Omega$ (implying in particular that $\partial\Omega_i$ cannot touch $\partial\Omega$). See Fig.~\ref{Farlo-Deco}.

\begin{figure}[!htb]
\begin{center}
\includegraphics[height=0.30\textwidth]{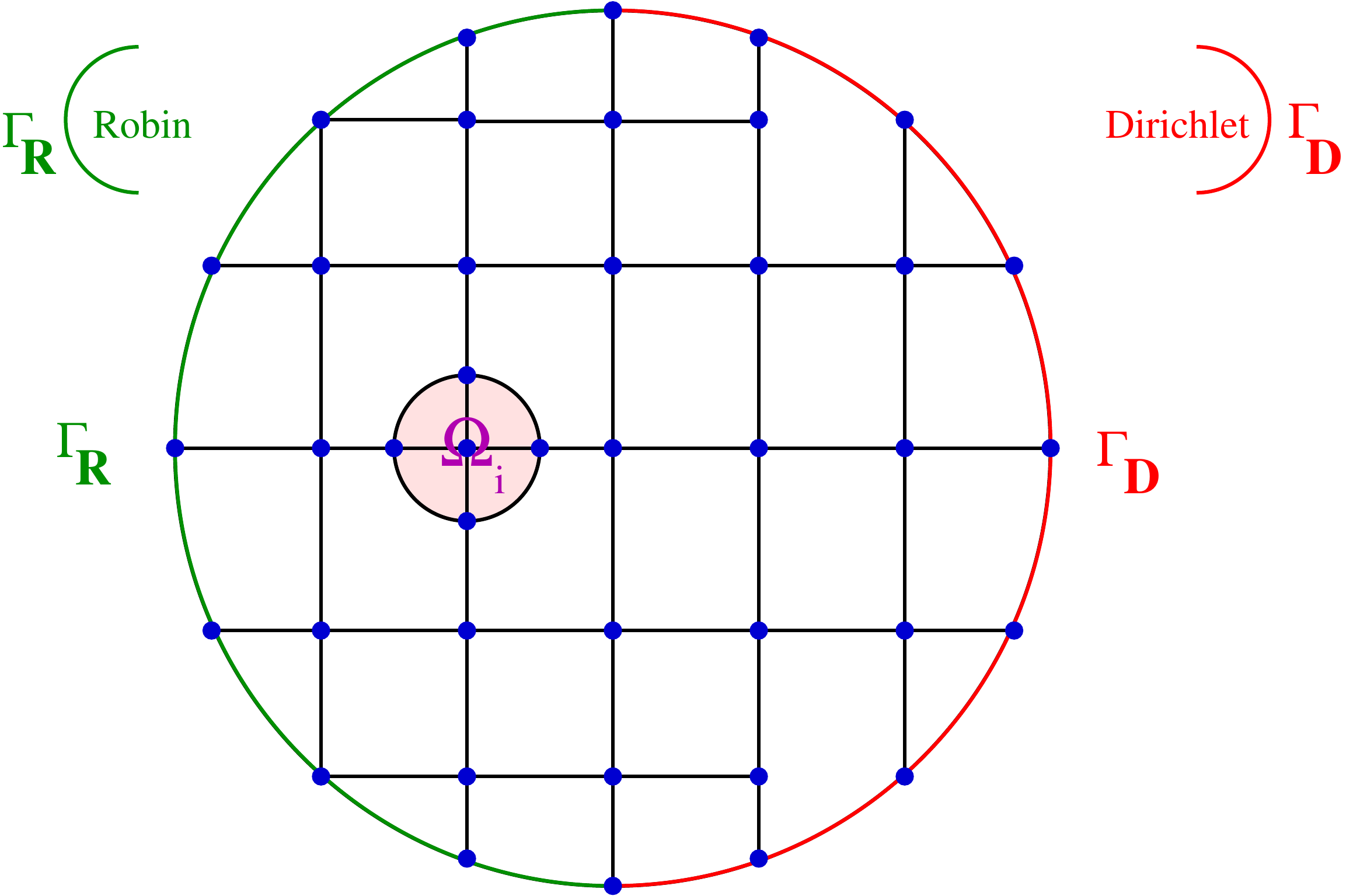} 
\end{center}
\caption{A decomposition with curved edges}
\label{Farlo-Deco}
\end{figure}

We will assume that the material properties in $\Omega_i$ might be different from the ones in the remaining part of $\Omega$. With a suitable attention to regularity issues one could also  easily treat more general cases.

{\color{black}
As already observed in the Introduction, we recall that both on $\partial\Omega_i$ and on 
$\partial\Omega$ we might have edges that are {\color{black} are declared to be straight. In that case it would be natural to treat the corresponding element as a {\it polygonal element}}. On the other hand, it would be cumbersome to check, for every boundary edge, whether it is {\it straight} (or {\it very close to straight}) or {\it curved}. Hence, we will {\bf assume} that each boundary edge belongs to one (and only one) of two distinct sets: in the first we put the ones that we know (from the very beginning) to be straight, and in the second we put the ones that {\it might be either
curved or straight or almost straight}. All the edges (and the corresponding elements) in each group will be treated in the same manner.  This implies that our treatment of curved edges should not fail if by chance the edge is straight or almost straight.  
}

\smallskip
 We consider the model problem
\begin{equation}
\left\{
\begin{aligned}\label{procon}
& \mbox{Find } u\in H^1(\Omega)\mbox{ such that: }\\
&-{\rm div}(\pippof\nabla u)=f\;\mbox{ in }\Omega,\\
&\mbox{with }u=g_D\mbox { on }\Gamma_D\mbox{ and }
(\pippof\nabla u)\cdot{\nn}+\Ro u\,=\,g_R \mbox{ on }\Gamma_R,
\end{aligned}
\right.
\end{equation}
where
\begin{itemize}
\item $\pippof$ assumes two constant values: ${\pippoi}$ in $\Omega_i$ and ${\pippoz}$ in  $\Omega\setminus \Omega_i$, both $>0$, 
\item $\Gamma_D$ and $\Gamma_N$ are open connected subsets of $\partial\Omega$ with $|\Gamma_D|>0$, $\overline{\Gamma}_D\cup\overline{\Gamma}_R=\partial\Omega$ and $\Gamma_D\cap\Gamma_R=\emptyset$,
\item $f$ is given, say, in $L^2(\Omega)$,
\item $g_D$ is given, say, in $H^{1}(\Gamma_D)$,
\item $g_R$ is given, say, in $L^2(\Gamma_D)$,
\item $\Ro\ge 0$ is given in $L^{\infty}(\Gamma_R)$. Note that for $\Ro=0$ we are back to Neumann boundary conditions.
\end{itemize}
It is very well known (and  not difficult to see) that defining
\begin{equation}
H^1_{0,{\Gamma_D}} :=\{v\in H^1(\Omega) \mbox{ such that }
v=0 \mbox{ on }\Gamma_D\},
\end{equation}
and
\begin{equation}
H^1_{{g_D},{\Gamma_D}} :=\{v\in H^1(\Omega) \mbox{ such that }
v=g_D \mbox{ on }\Gamma_D\},
\end{equation}
the solution $u$ of \eqref{procon} coincides with the (unique) solution of the variational problem
\begin{equation}
\left\{
\begin{aligned}\label{proconvar}
& \mbox{find } u\in H^1_{g_D,\Gamma_D} \mbox{ such that }\\
&\int_{\Omega}\pippof\nabla u\cdot\nabla v\,{\rm d}\Omega
+\int_{\Gamma_R}\Ro\,u\,v\,{\rm d}s=\int_{\Omega}f\,v\,{\rm d}{\Omega}
+\int_{\Gamma_R}g_R\,v\,{\rm d}s\;\;
\forall v\in H^1_{0,\Gamma_D}.
\end{aligned}
\right.
\end{equation}
We also point out that, in a natural way, \eqref{proconvar} also implies that the co-normal derivative $\pippof\nabla u\cdot\nn$ is continuous across the boundary of $\Omega_i$.

\subsection{The decomposition}

Let $\Th$ be a decomposition  of $\Omega$ into polygons 
$\PP$. For simplicity of exposition we assume that each polygon has at most one curved edge. We also assume that $\partial\Omega_i$ is all contained in the union of the $\partial\PP$ (in other words: the decomposition respects the discontinuities of $\pippof$). See
Fig. \ref{Farlo-Deco}.

In what follows, we are going to construct virtual element spaces that are suitable for the discretization of  \eqref{proconvar} using decompositions of the type considered above. 
{\bf A priori}, we should be prepared  to distinguish among several different types of elements:
\begin{itemize}
\item $\{\bf 0\}$ Polygons with straight edges. For them we are going to use classical VEMs.
\item $\{\bf 1\}$ Elements with a curved edge shared with  another element. For instance, 
referring  to Fig.\ref{Farlo-Deco}, the elements having a curved edge that belongs to $\partial\Omega_i$. There are 8 of them in  Fig. \ref{fig:bordi}.
\item  $\{\bf 2\}$ Elements that have a curved edge that belongs entirely to $\overline{\Gamma}_D$. There are 8 of them in our Fig. \ref{fig:bordi}.
\item $\{\bf 3\}$  Elements that have a curved edge that belongs entirely to $\overline{\Gamma}_R$. There are 8 of them in our 
 Fig. \ref{fig:bordi}.  
\end{itemize}
\begin{figure}[!htb]
\begin{center}
\includegraphics[height=0.35\textwidth]{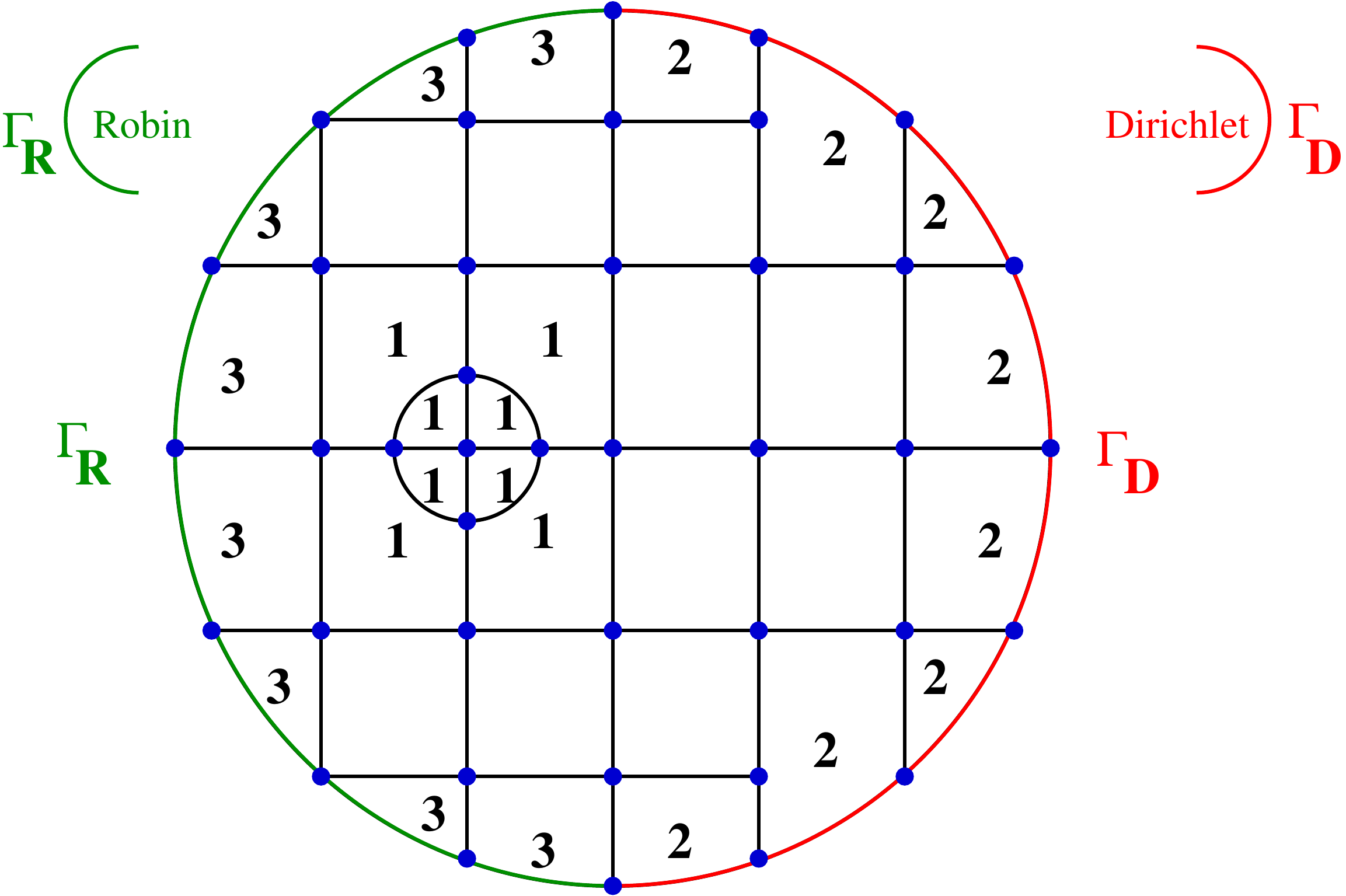} 
\end{center}
\caption{Types of elements with a curved edge}
\label{fig:bordi}
\end{figure}
%



\section{The local spaces}\label{locspa}
\subsection{Subspaces on polygons with straight edges}
We { start} by recalling the classical VEMs commonly used in polygonal decompositions.   
%
For a given integer $k\ge 1$ we consider the trace space
\begin{equation} 
\calB_k(\partial\PP):=\{v\in C^0(\partial\PP)\mbox{ such that }
v_{|e}\in\Po_k(e)\;\forall\mbox{ edge }e\subset{\partial\PP}\}.
\end{equation}
Then, for another integer $\kd$ with $-1\le \kd\le k$ we consider the space:
\begin{equation} 
\calV_{k,\kd}(\PP):=\{v\mbox{ such that }v_{|\partial\PP}\in
\calB_k(\partial\PP), \mbox{ and }\Delta v\in\Po_{\kd}(\PP)\}.
\end{equation}
The natural set of degrees of freedom for 
$\calV_{k,\kd}(\PP)$ is given by (see e.g. \cite{projectors})
\begin{itemize}
\item the values at each vertex of $\PP$,
\item (for $k\ge 2$) the values at the $k-1$ Gauss-Lobatto points of each edge of $\PP$,
\item (for $\kd\ge 0$) the moments of order $\le \kd$ inside $\PP$. 
\end{itemize}
We point out that polygons having {\it two or more consecutive 
edges that belong to the same straight line} are perfectly
allowed, so that {\it vertices and edges}, here, do not coincide with the naive idea that one might have when speaking of {\it vertices and edges of a polygon}. See the example in Fig. \ref{fig:exa}.
\begin{figure}[!h]
\begin{center}
\includegraphics[height=0.250\textwidth]{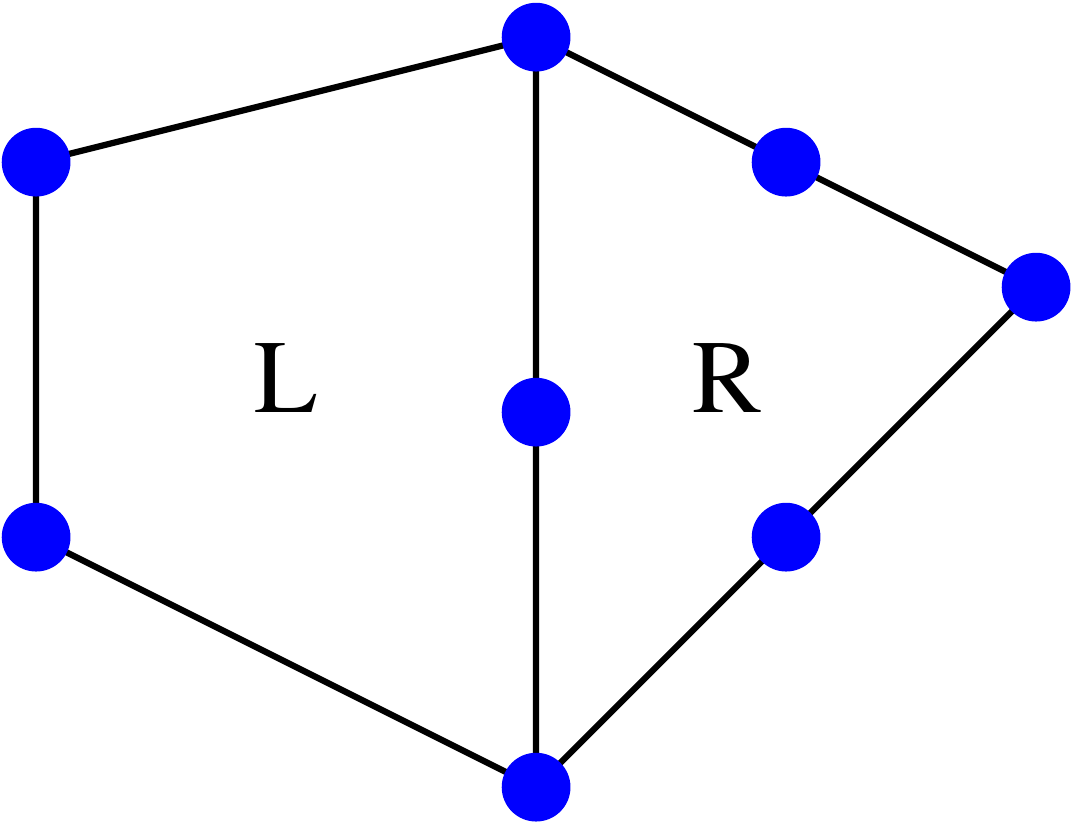} 
\end{center}
\caption{The Left element has 5 vertices and edges; the Right element has 6 vertices and edges.}
\label{fig:exa}
\end{figure}

Here, for simplicity, we will stick on the simplest case where $\kd\equiv k-2$, which is the original choice of \cite{volley}. Hence for $k\ge 1$ we set
\begin{equation}\label{def-Vk}
\calV_k(\PP):=\calV_{k,k-2}(\PP)
\end{equation}
and we have therefore the degrees of freedom:
\begin{equation}\label{dof-Vk}
\begin{aligned}
&\mbox{- the values at each vertex of $\PP$},\\
&\mbox{-  (for $k\ge 2$) the values at the $k-1$ Gauss-Lobatto points of each edge of $\PP$},\\
&\mbox{- (for $k\ge 2$) the moments of order $\le k-2$ inside $\PP$} 
\end{aligned}
\end{equation}
\begin{remark}\label{XSere} In previous works we used the bigger spaces
(corresponding to $\kd=k-1$ or $\kd=k$) as starting point  for a Serendipity correction (see \cite{SERE-nod}) that allowed to end up with local VEM spaces smaller than the $\calV_k$ defined in \eqref{def-Vk}. Here however the Serendipity procedure should be done in different ways for different types of elements (according to our previous classification), and the presentation would become more cumbersome.
\qed
\end{remark}

\subsection{Subspaces on elements with a curved edge not in $\Gamma_D$}

When dealing with elements $\PP$ having a curved edge, we would like to 
follow the same philosophy of the straight polygons. The main feature that we are not willing to give-up  here is the fact that the polynomial space 
$\Po_k(\PP)$ is included in the VEM space ${\calV}_k(\PP)$.

As we already said, we are ready to accept that an edge 
which a-priori could be curved (say, an edge in $\partial\Omega_i$ or in $\partial\Omega$) might, by chance, be straight. 
However, in the present theoretical treatment we will assume, for simplicity, that
all the edges in $\partial\Omega_i$ and in $\partial\Omega$
are declared as "curved", and all the other internal edges are declared as "straight".  We recall that, for simplicity of exposition,  we also assumed that each element has at most one curved edge.  

 To start with, we define therefore the space of traces on a curved edge as follows.  For a given integer $k\ge 1$, on a given element 
$\PP$ with a curved edge $\gamma$, we consider the trace space
\begin{align*} 
\calB_k(\partial\PP)&:=\{v\in C^0(\partial\PP)\mbox{ such that: } v_{|e}\in\Po_k(e)\, \forall\mbox{ straight edge }e\subset{\partial\PP}, 
\\
&\hskip1cm\mbox{ and } v_{|\ecurvo}\equiv {q_k}_{|\ecurvo} \mbox{ for some } q_k\in\Po_k \}. 
\end{align*}
It is clear that:
 
\centerline{\it the trace on $\partial\PP$ of every polynomial of $\Po_k$ belongs to  $\calB_k(\partial\PP)$.}

\smallskip
\noindent Now everything seems to be done, and for every integer $k\ge 1$ 
we can define ({\it exactly as before})
\begin{equation}\label{def-calVk-c}
{\calV}_k(\PP):=\{v\mbox{ such that }v_{|\partial\PP}\in
\calB_k(\partial\PP), \mbox{ and }\Delta v\in\Po_{k-2}(\PP)\}.
\end{equation}
Here too it is easy to check that for every $k\ge 1$ we have
\begin{equation}
\Po_k(\PP)\subseteq {\calV}_k(\PP).
\end{equation}
The delicate point comes from the choice of the {\it degrees of freedom}, treated in the next subsection. 

\subsection{VEM spaces, degrees of freedom, and "generators"}

In $\calB_k(\partial\PP)$ it seems natural to start taking as degrees of freedom: 
\begin{itemize}
\item The values of $v$ at the vertices,
\end{itemize}
 and {\it for every straight edge} $e$, and $k\ge 2$: 
\begin{itemize}
\item The values of $v$ at the $k-1$ Gauss-Lobatto points of $e$.
\end{itemize}
With that, we took care of the straight edges {\it and of  the values
at the endpoints of the curved edge}. But already for $k=1$ we need some additional information on the edge $\gamma$ declared as curved. Indeed,  once we know the values of a polynomial in $\Po_k$ at two distinct points, we still need $\pi_k-2$ degrees of freedom to determine it  uniquely. This will be done by considering the values at $\pi_k-2$ {\it fictitious points}, that we call {\it trace generator points} (or, simply, {\it tg points}, or even {\it tgp}) located as in Fig. \ref{fig:tragen}  (essentially: the points that would normally be used to place {\it the degrees of freedom for $\Po_k$ on an ideal 
triangle having "the segment joining the endpoints of the curved edge" as base}).

\begin{figure}[!h]
\begin{center}
\includegraphics[height=0.30\textwidth]{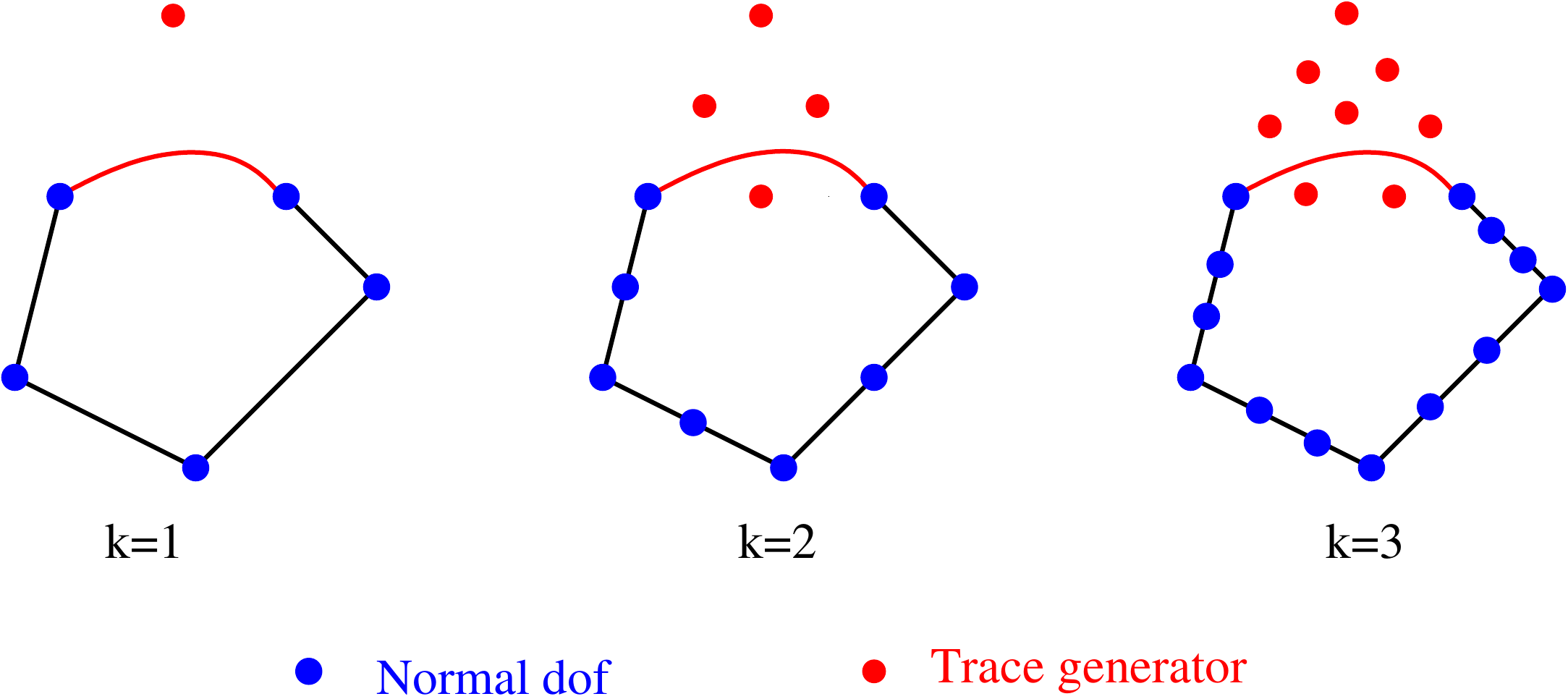} 
\end{center}
\caption{{\color{black} Normal degrees of freedom} and {\color{black}trace generators} for $k=1, 2, 3$}
\label{fig:tragen}
\end{figure}

We consider therefore the set of values:
\begin{equation}\label{pseudodof}
\left\{
\begin{aligned} 
&\bullet\mbox{ the values at each vertex of $\PP$,}\\
&\bullet\mbox{ (for $k\ge 2$) the values at the $k-1$ Gauss-Lobatto points of each 
straight edge of $\PP$},\\
&\bullet\mbox{ the values at the $\pi_k-2$ trace generator points of the curved edge.}
\end{aligned}
\right.
\end{equation}

We point out that the values indicated in \eqref{pseudodof} can identify uniquely an element   of 
$\calB_k(\partial\PP)$
but cannot be taken as {\it degrees of freedom} 
{\it in the classical sense}. Indeed, it is very easy to see that on the curved edge $\gamma$:
\begin{itemize}
\item for every $k\ge 1$,  for every set of $\pi_k$ values (at the  trace generators plus the two endpoints), there exists, on $\gamma$, a unique function  which is ``the restriction to $\gamma$ of a polynomial $q_k$ in $\Po_k$  which assumes the given $\pi_k$ values at the $\pi_k$ points".
\item However, depending on the shape of $\gamma$, there might be several different  ${q_k}$'s that have the same restriction  to $\gamma$ (see the example here below).
\end{itemize} 
\noindent
Indeed, already for $k=1$, if $\gamma$ is a straight segment (a case that we {\bf do not} want to forbid!) the restriction of a polynomial of degree $\le 1$ to 
$\gamma$ will depend only on the values of the polynomial at the endpoints (= vertices of $\PP$) and not on its value at the trace generator. So we will have one tgp plus two endpoints, but the dimension of the  space of their restrictions to $\gamma$ will be only 2.


Summarizing: an  element 
$v\in {\calV}_k(\PP)$ {\it could be identified} by
\begin{equation}
\begin{aligned}\label{dof}
&\mbox{- the values at each vertex of $\PP$},\\
&\mbox{- (for $k\ge 2$) the values at the $k-1$ Gauss-Lobatto points of each
straight edge of $\PP$},\\
&\mbox{- the values at the trace generator points of the curved edge $\gamma$},\\
&\mbox{- (for $k\ge 2$) the moments of order $\le k-2$ inside $\PP$}.
\end{aligned}
\end{equation}
However, such an {\it identification} will not be {\it injective}, as different sets of the above parameters \eqref{dof} might generate the same element of ${\calV}_k(\PP)$. Hence it is a natural choice {\it not to call} them {\it degrees of freedom}, and {\it for the quantities \eqref{dof} we are going to stick instead to the name {\bf generator values}, or simply {\bf generators}}.

All this, and the treatment of other couplings $\{space, generators\}$ that we are going to see all over the paper, might induce a certain amount of confusion: not really in the proofs (as we shall see), and even less in the code (once you know what has to be done). But the {\it description of the method} might easily become confused. Indeed, our trial and test variables will have {\it two faces}.
We must consider 
\begin{itemize}
\item The set of generators \eqref{dof}: this is inevitably the only thing that will be seen in the code. We will indicate them (both locally and globally) with {\it small capital} letters: $\gu$, $\gv$, $\gw$, etc.
\item To each set of generators we attach a function that leaves in one of our VEM spaces. These will be indicated by $\fu$, $\fv$, $\fw$, etc.  (or, sometimes, even by $u$, $v$, or $w$), respectively.
\end{itemize}

Occasionally it will also be convenient to introduce
\begin{equation}\label{defbG}
{\bf G}\{{\calV}_k(\PP)\}:=\mbox{ the set of generators of } 
{\calV}_k(\PP)
\end{equation}
that obviously coincides with $\R^{N_k^{\PP}}$, where 
$N_k^{\PP}$ is the dimension of the space of generators of ${\calV}_k(\PP)$.
\begin{remark}\label{fvzero}
It has to be pointed out that for each set of generators (say, $\gv$) we have a unique associated function $\fv$, but {\it the converse is not true}, as we have seen. Indeed, there might be sets of generators $\gv\not={\bf 0}$ whose associated function $\fv$  is identically zero. In such a case the space of generators ${\bf G}\{{\calV}_k(\PP)\}$ would have a dimension {\it bigger} than that of the functional space ${\calV}_k(\PP)$. 
The unknown solution of the discretized problem, in the computer code, will be a set
of generators $\gu_h\in {\bf G}\{{\calV}_k(\PP)\}$. This $\gu_h$ will identify uniquely an element $\fuh$ of our VEM space, corresponding to what, in almost all papers on numerical methods for PDE's, would be denoted by $u_h$.
\hfill\qed
\end{remark}

\begin{remark}\label{Gpol}
 We observe that, whenever a function $v \in {{\calV}_k(\PP)}$ is a polynomial, it is always possible to associate with it a set of generators $\gv\in{\bf G}
\{{\calV}_k(\PP)\}$, defined through \eqref{dof}. We denote this by
\begin{equation}\label{gen-pol}
{\gv}={\mathcal G} v\qquad v\in \Po_k(\PP).
\end{equation}
We underline the fact that the operator ${\mathcal G}$ is {\bf not} defined on the whole space  ${\calV}_k(\PP)$, but only on 
$\Po_k(\PP)\subseteq {\calV}_k(\PP)$. In particular, having  a generic function $v \in {\calV}_k(\PP)$, in order to reconstruct {\bf a} set of generators $\gv$ such that $\fv \equiv v$ we must prescribe the values of $\gv$ on the trace generator points (obviously, among those that generate the right value of $v$ on $\gamma$).
\hfill\qed
\end{remark} 


\begin{remark}\label{cfrovall}
At this point we are able to detail in a more precise way the difference between the present approach and the one in \cite{Ovall}. Indeed, here (as we have seen) we are going to keep the {\bf idle generators} together with the "working ones", while in \cite{Ovall} the idle generators (identified, roughly speaking, as the ones whose trace on the curved edge is too small) are just eliminated from the set of unknowns. Clearly this is done after a suitable change of basis has been performed in the space of trace generators. We acknowledge the fact that this makes the whole presentation, and partly the code as well, much simpler than our approach. On the other hand we think that the decision on "how lazy a trace generator has to be, in order to justify its elimination" could become delicate, and potentially giving rise to some instabilities in the borderline cases. All considered we believe that having both possibilities at hand could be  good for the scientific community.
\end{remark}

\subsection{Elements with one edge in $\Gamma_D$}

When $\PP$ is an element with one curved edge $\gamma$ in 
$\Gamma_D$ we proceed in a different way. For every function $\psi \in H^1(\gamma)$ (including the cases $\psi=0$, and $\psi=g_D$) we define ${\calV}_{k,\psi}(\PP)$ as follows.
\begin{equation}\label{defVkPG}
{\calV}_{k,\psi}(\PP):=\{v\in C^0(\overline{\PP}) \mbox{  such that }
v_{|\gamma}\!=\!\psi, 
~ v_{|e}\!\in\! \Po_k(e) 
\mbox{ on edges } e\!\neq\!\gamma, \mbox{ and } \Delta v\in\Po_{k-2}(\PP)
\} .
\end{equation}
Clearly, {\it once $\psi$ has been given}, in order to identify 
an element of ${\calV}_{k,\psi}(\PP)$ we must prescribe (in addition to the knowledge of $\psi$):
\begin{equation}
\begin{aligned}\label{dofGam}
&\mbox{- the values at each vertex of $\PP$ not on 
$\gamma$},\\
&\mbox{- (for $k\ge 2$) the values at the $k-1$ Gauss-Lobatto points of each
straight edge of $\PP$},\\
&\mbox{- (for $k\ge 2$) the moments of order $\le k-2$ inside $\PP$}.
\end{aligned}
\end{equation}
 
Note that, in this case, we could actually use the term {\it degrees of freedom}. Indeed, once $\psi$ has been fixed, the mapping from the above quantities to the elements of ${\calV}_{k,\psi}(\PP)$ is injective. Note also that for a general $\psi$ the affine manifold  
${\calV}_{k,\psi}(\PP)$ will fail to contain all polynomials of $\Po_k$, but whenever $\psi$ is the trace of a polynomial $p_k\in\Po_k$ then such a $p_k$ will belong to ${\calV}_{k,\psi}(\PP)$ (so that the patch-test will not be jeopardized).

For homogeneity of notation, here too we will consider the set of {\it generators} ${\bf G}\{{\calV}_{k,\psi}(\PP)\}$
as in \eqref{defbG}, although this, obviously, for $\psi\not\equiv 0$
will not be a linear space, but only an affine manifold.

\section{The local VEM stiffness matrices}\label{locstiff}

We define for $u$ and $v$ in $H^1(\Omega)$
\begin{equation}\label{splita}
a^\PP(u,v):=\int_{\PP}\pippof\nabla u \cdot \nabla v\,{\rm d}\Omega\quad \mbox{ and }a(u,v):=\sum_{\PP}a^\PP(u,v),
\end{equation}
and we point out that, obviously,
\begin{equation}\label{CS}
a^\PP(u,v)\le \pippof_{|\PP}\|u\|_{1,\PP}\,\|v\|_{1,\PP}, \quad \mbox{ and } \quad a(u,v)\le \pippof^{*}\|u\|_{1,\Omega}\,\|v\|_{1,\Omega}, \quad \mbox{ with }\pippof^{*}=\max\{\pippoi,\pippoz\},
\end{equation}
as well as (using Poincar\'e's inequality)
\begin{equation}\label{Poi}
 a(v,v)\ge C_{*} \pippof_{*}\|v\|_{1,\Omega}^2 \qquad \forall v \in H^1_{0,\Gamma_D}, \quad \mbox{ with }\pippof_{*}=\min\{\pippoi,\pippoz\}.
\end{equation}

 In this section we will construct for each element $\PP$ a bilinear form $a_h^\PP(\gu,\gv)$ (defined on generators of VEM spaces) to be used to approximate the {\it continuous} bilinear form $a^\PP$. Then, as done for Finite Elements,  we will define the {\it global} virtual element spaces, the {\it global} bilinear forms, and the {\it global} right-hand sides summing the contributions of the single elements.

\subsection{The $\Pi^{\nabla}_k$ projection operator}

Our first, fundamental, item will be (as common for Virtual Elements) the construction of the 
$\Pi^{\nabla}_k$ projection operator. Given an element $\PP$ and a function $v$ in $H^1(\PP)$, 
we construct  a polynomial $\Pi^{\nabla}_k v$  in $\Po_k$  defined by

\begin{equation}
\left\{
\begin{aligned}\label{defpinablak}
&\int_{\PP}\nabla \Pi^{\nabla}_k v\cdot\nabla q_k\,{\rm d}\PP=\int_{\PP}\nabla v \cdot \nabla q_k\,{\rm d}\PP\qquad\forall
q_k\in\Po_k, \\
&\int_{\partial{\PP}}\Pi^{\nabla}_k v\,{\rm d}s=\int_{\partial{\PP}}v\,{\rm d}s.
\end{aligned}
\right.
\end{equation}
We point out that for a $v\in \calV_k(\PP)$ {\color{black}(or in $\calV_{k,g}(\PP)$ for some $g\in H^1(\Gamma_D)$}) all the terms appearing in  \eqref{defpinablak} are actually computable (for all types of elements \eqref{def-Vk},
\eqref{def-calVk-c}, and \eqref{defVkPG})  from the knowledge  of the {\it degrees of freedom} (or of the {\it generators}) \eqref{dof-Vk}, or \eqref{dof}, or
\eqref{dofGam}, respectively. Indeed, we first note that both left-hand sides of
  \eqref{defpinablak} are integrals of polynomials over $\PP$ or 
$\partial\PP$, respectively . The right-hand side of the second equation in \eqref{defpinablak} is also computable, since $v$ is known on $\partial\PP$ (being either a polynomial, or the trace of a polynomial, or equal to $g$).
Finally, the right-hand side of the first equation is
\begin{equation}\label{byparts1}
\int_{\PP}\nabla v\cdot\nabla q_k{\rm d}\PP=-\int_{\PP}v \Delta q_k{\rm d}\PP+\int_{\partial\PP} v\, (\nabla q_k\cdot{\bf n}){\rm d}s.
\end{equation}
The first term in the right-hand side of \eqref{byparts1} is made of  moments of $v$
of order $k-2$ on $\PP$ (and hence computable from the "degrees of freedom" of $v$), and the second term is also computable since 
$v$ is either a polynomial, or a trace of a polynomial, or equal to $g$ on $\partial\PP$.

Once we defined the projection $\Pi^{\nabla}_k$, and checked that it is {\it computable}, in practice,
for every $v\in \calV_k(\PP)$,
we can extend it, in the obvious way, to an element $\gv$ in ${\bf G}\{\calV_k(\PP)\}$ by setting
\begin{equation}\label{pinagv}
\Pi_k^{\nabla}\gv:=\Pi_k^{\nabla}\fv.
\end{equation}
 Then we can follow the usual track of Virtual Elements, setting, for  $\gu$ and $\gv$ in ${\bf G}\{\calV_k(\PP)\}$: 
\begin{equation}\label{defahP}
a_h^{\PP}(\gu,\gv):=\int_{\PP}\pippof\nabla \Pi^{\nabla}_k\gu\cdot\nabla\Pi^{\nabla}_k\gv\,{\rm d}\PP + \S^{\PP}((\I-{\mathcal G}\Pi^{\nabla}_k)\gu,(\I-{\mathcal G}\Pi^{\nabla}_k)\gv)
\end{equation}
where $\I$ is the {\it identity} operator, $\mathcal G$ has been defined in \eqref{gen-pol}, and, as usual in VEM formulations, $\S^{\PP}$ is a symmetric positive semi-definite bi-linear form 
such that there exists a positive constant $\alpha_*$, independent of $h$, with
\begin{equation}\label{stabilityl}
\alpha_*a^{\PP}(\fv,\fv)\le a_h^{\PP}(\gv,\gv)\quad\forall\, \gv\in {\bf G}\{\calV_k(\PP)\}.
\end{equation}

\noindent We immediately point out that, independently of the choice of 
$\S^\PP$, we will always have the consistency property
\begin{equation}\label{consistency}
a^\PP_h({\mathcal G} p_k,\gv)=a^\PP({ p}_k,\Pi^{\nabla}_k\fv)\equiv a^\PP({ p}_k,\fv)\quad\forall  p_k\in\Po_k,\;\forall \gv\in {\bf G}\{\calV_k(\PP)\}. 
\end{equation}
Coming now to the choice of $\S^\PP$, we note that there are {\it many} guidelines available in the VEM literature in order to construct a stabilizing bilinear 
form $\S^{\PP}$. 
Here, however, we {\it must} distinguish between bilinear forms that one could apply
to generic {\it elements $v$ of the VEM space} $\calV_k(\PP)$,
and bilinear forms that can also be applied to the generators $\gv\in {\bf G}\{\calV_k(\PP)\}$. An example of the first case is
 the rather common 
\begin{equation}\label{Sintebor}
\S^{\PP}(u,v):=h_{\PP}^{-1} {\pippof_{|\PP}}\int_{\partial\PP} u\,v{\rm d}\ell
\end{equation}
where $h_{\PP}$ is, say, the diameter of $\PP$. An example of the second case is the (equally classic)
\begin{equation}\label{dofi-dofi}
\S^{\PP}(\gu,\gv):={\pippof_{|\PP}}\sum_i \delta_i(\gu)\delta_i(\gv),
\end{equation}
where
\begin{itemize}
\item each $\delta_i(\gv)$  is the $i$-$th$ term of the generator $\gv$,
\item the sum is extended to all of them.
 \end{itemize}
Note, however, that {\it  here} a choice like \eqref{Sintebor} {\bf cannot}
be used: indeeed, with the choice \eqref{Sintebor} a $\gv$ having $\fv\equiv 0$ will not be stabilized, and \eqref{stabilityl}
will not hold.
%
%
Hence, to fix the ideas, we will assume here that the stabilizing term $\S^{\PP}(\gu,\gv)$ has exactly the form
\eqref{dofi-dofi}. In this case, under suitable assumptions on the mesh, that will be discussed in Sect. \ref{interr}, \eqref{stabilityl} will always hold.
At the same time, we also have
\begin{equation}\label{magdelta}
a_h^\PP(\gv,\gv)+ \|\Pi^{0,\PP}_{0}\fv\|^2_{0,\PP}\ge \sigma_1\, 
\sum_i (\delta_i(\gv))^2
\end{equation}
for some other constant $\sigma_1>0$ independent of $h$.

\begin{remark}\label{daunaparte} Actually, as we shall see with much more detail in what follows, we do not need  \eqref{stabilityl}
to hold separately in each element. We just need that {\bf the sum} (over all the elements $\PP$ in $\Th$) satisfies the analogue of  \eqref{stabilityl}. Indeed, thinking about that, it is clear that the degrees of freedom common to two
or more elements would be  "stabilized more than once" by applying the stabilization 
 in each element. Hence we might consider the possibility of stabilizing {\bf each} of them {\bf in one element} only. In most cases taking this into account would only allow a minor saving in the amount of operations, paid, on the other hand, with some additional complications in the code. In some other cases, however, (typically when a curved edge separates two elements where $\pippof$ assumes very different values) stabilizing only {\bf once} would be the best choice in order to preserve accuracy. We will come back to that in Sec. \ref{sec:Details}. \hfill\qed 
\end{remark}
\begin{remark}\label{stabprob}
We recall that the classical stability condition for VEM stiffness matrices would  be to require the existence of {\it two} positive constants   
$\alpha_*$ and  $\alpha^*$ such that
\begin{equation}\label{stability}
\alpha_*a^{\PP}(v,v)\le a_h^{\PP}(v,v)\le\alpha^* a^{\PP}(v,v)\quad\forall\, v\in \calV_k(\PP).
\end{equation}
This with the present notation would be
\begin{equation}\label{stabilityG}
\alpha_*a^{\PP}(\fv,\fv)\le a_h^{\PP}(\gv,\gv)\le\alpha^* a^{\PP}(\fv,\fv)\quad\forall\, \gv\in {\bf G}
\{\calV_k(\PP)\}.
\end{equation}
However, there are cases where the second inequality in \eqref{stabilityG} would be  very difficult (if not definitely impossible) to obtain. This could happen 
in the presence of {\bf idle generators}. For instance, when $\gamma$ is straight there will  be generators $\gv$ (corresponding to trace generator points "attached to $\gamma$ but not belonging to $\gamma$") such that $\fv$ is identically zero on $\gamma$ (and then in the whole $\PP$), producing in \eqref{stabilityG} a left-hand side and a right-hand side that are both equal to zero. If these dof's were not stabilized, the final stiffness matrix would end up being singular, but stabilizing them (for instance with the  choice \eqref{dofi-dofi}) would make the right inequality in \eqref{stabilityG} impossible.
\hfill\qed
\end{remark}

\section{The global spaces and the global $a_h$}\label{glostiff}

\subsection{Approximations of $H^1_{0,\Gamma_D}$
and $H^1_{g_D,\Gamma_D}$}

To start with, we shall now design the global space and the global {\it affine manifold} to be used as approximations  of $H^1_{0,\Gamma_D}$
and $H^1_{g_D,\Gamma_D}$ (respectively) in order to discretize 
\eqref{proconvar}.

For this, we define first, for every function $\psi \in H^1(\Gamma_D)$:
\begin{multline}\label{vem:definition}
\calV_{k,\psi}(\Omega):=\{v\in C^0(\overline{\Omega}) \mbox{ such that } v_{|\PP}\in 
\calV_{k}(\PP)\;\, \forall \PP\in \Th \mbox{ without edges in $\Gamma_D$}\\
\mbox{ and }
v_{|\PP}\in 
\calV_{k,\psi}(\PP)\;\, \forall \PP\in \Th \mbox{ with an edge in } \Gamma_D
\},
\end{multline}
{\color{black}and we observe that, for $\psi=0$, $\calV_{k,0}(\Omega)$ is a linear space.}
The {\it generators} for  $\calV_{k,\psi}(\Omega)$  will be:
\begin{itemize}
\item The values at the vertices in $\Omega\cup\Gamma_R$. One unknown per each such vertex. Remember that
$\Gamma_R$ is an {\it open} subset of $\partial\Omega$, so that the points belonging to the closure of  ${\Gamma_D}$ are excluded. 
\item (for $k\ge 2$) The values at the $k-1$ Gauss-Lobatto points of each (straight) edge internal to $\Omega$. Hence $k-1$ unknowns per internal edge. 
\item The values at the {\it trace generator points} of each curved edge,
on $\partial\Omega_i$ and on $\Gamma_R$. These are $\pi_{k}-2$ for each such edge.
\item (for $k\ge 2$) The moments of order $\le k-2$ internal to each element 
($\pi_{k-2}$ unknowns per each element).
\end{itemize}

Here too we have that a generator $\gv$, together with a function $\psi$ in $H^1(\Gamma_D)$ will identify uniquely an element in $\calV_{k,\psi}(\PP)$, but the converse will not be true.

As common in the Finite Element codes,  the affine manifold $\calV_{k,g_D}$ could be constructed as 
\begin{equation}
\calV_{k,g_D}(\Omega):=\ghD+
\calV_{k,0}(\Omega)
\end{equation}
where $\overline{g_D}$ is a single fixed element of  
$\calV_{k,g_D}$ suitably constructed. 
 Typically, one takes
$\overline{g_D}$ to be {\it the} element in $\calV_{k,g_D}$
that in each $\PP$ has all the degrees of freedom \eqref{dofGam} equal to zero.

Here, for simplicity, we will treat {\it every} edge $e$ in 
$\Gamma_D$ as if it was {\it a curved edge}. Indeed, as we already pointed out several times, we want our approach to accept straight boundary edges as  particular cases of the curved ones, in particular since we do not want to enter the details of edges that are {\it only slightly curved}. 
Needless to say, if a whole part of $\Gamma_D$
(or even the  whole $\Gamma_D$) is known to be straight, then the corresponding elements will be treated as {\it normal polygons}.

\begin{remark}\label{trattog}
The present approach for dealing with Dirichlet boundary conditions is surely the one that is simpler to present, and at the same time simpler to analyze (as we shall see in a while).
However, depending on the way in which the code has been structured, it might create some difficulties in the implementation, and in particular its extension to the cases of elements with more than one curved boundary edge could become
complicated to deal with in the computer code.  Actually, here 
too we might proceed as is also common in Finite Element codes, constructing first the local stiffness matrix as for an internal element, and then rearrange rows and columns (and right-hand side) of the final global stiffness matrix in order to take into account that some boundary unknowns are actually  known, and given by $g$. 
\end{remark}

\subsection{Approximation of the bilinear forms and right-hand sides}

We are now ready to  write the {\it global} quantities. {\color{black}For any function $\psi\in H^1(\Gamma_D)$, and for every $\gu$ and $\gv$ in
${\bf G}\{\calV_{k,\psi}(\Omega)\}$, we set}
\begin{equation}\label{splitah}
a_h(\gu,\gv)=\sum_{\PP\in\Th}a_h^{\PP}(\gu,\gv)
\end{equation}
where $a_h^{\PP}$ has been defined in \eqref{defahP}. The boundary integrals 
\begin{equation}
\int_{\Gamma_R}\Ro\, \fu\, \fv\,{\rm d}\ell, \quad \mbox{and} \quad \int_{\Gamma_R}g_R \fv\,{\rm d}\ell
\end{equation}
are not a difficulty for $\fu$ and $\fv$ in $\calV_{k,\psi}(\Omega)$. It is clear that from \eqref{stabilityl} we immediately have
\begin{equation}\label{stabilitylg}
\alpha_*a(\fv,\fv)\le a_h(\gv,\gv)\quad\forall\, \gv 
\in {\bf G}\{\calV_{k,0}(\Omega)\}.
\end{equation}
In order to have some upper bound (similar to the one in \eqref{stabilityG}) to be used to prove error estimates, we define a new norm 
\begin{equation}\label{triplebarnorm}
\| \gv\|_{1,S}:=(a_h(\gv,\gv))^{1/2}.
\end{equation}
This allows us to re-write the stability \eqref{stabilitylg} as
\begin{equation}\label{stabilitylgf}
\alpha_*a(\fv,\fv)\le a_h(\gv,\gv)\equiv\| \gv\|_{1,S}^2\qquad\forall\, \gv \in{\bf G}\{\calV_{k,0}(\Omega)\}.
\end{equation}
%
We also note that, naturally, 
\begin{equation}\label{CSh}
a_h(\gu,\gv)\le\|\gu\|_{1,S}\,\|\gv\|_{1,S}\;\qquad\forall\, \gu,\gv \in{\bf G}\{\calV_{k,0}(\Omega)\}.
\end{equation}
Summing \eqref{magdelta} over the elements, we have
\begin{equation}\label{magdeltOom}
a_h(\gv,\gv)+ \sum_{\PP\in \Th}\|\Pi^{0,\PP}_0\fv\|^2_{0,\PP}\ge\sigma_1\, 
\sum_i (\delta_i(\gv))^2 \qquad\forall\, \gv \in{\bf G}\{\calV_{k,0}(\Omega)\}
\end{equation}
where now the sum is extended to all indices in
${\bf G}\{\calV_{k,{0}}(\Omega)\}$.  In turn, using the properties of projection operators and Poincar\'e
inequality one has
\begin{equation}\label{Poinca}
\sum_{\PP\in \Th}\|\Pi^{0,\PP}_0\fv\|^2_{0,\PP}\le
\|\fv\|^2_{0,\Omega}\le C\,a(\fv,\fv) \qquad\forall\, \gv \in{\bf G}\{\calV_{k,0}(\Omega)\}
\end{equation}
for some constant $C$ independent of $h$.  The combined use of \eqref{magdeltOom}, \eqref{Poinca}, and \eqref{stabilitylgf} 
gives then
\begin{equation}\label{ahellipt}
a_h(\gv,\gv)\ge\sigma_2\, 
\sum_i (\delta_i(\gv))^2 \qquad\forall\, \gv \in{\bf G}\{\calV_{k,0}(\Omega)\}
\end{equation}
for some constant $\sigma_2$ independent of $h$.

Next, on every element $\PP$, let $\fv$ be the unique function associated with the set of generators $\gv$. We define
\begin{equation}\label{per-fh}
\T_{\PP}\fv:=\begin{cases}
\displaystyle\Pi^{\nabla}_1 \fv~~&\mbox{for }k=1\\[2mm]
\displaystyle\Pi^{0,\PP}_{k-2} \fv~~&\mbox{for }k\ge 2
\end{cases}
\end{equation}
and 
\begin{equation}
\T \gv\equiv \T \fv:=
\{\T_{\PP}\fv \mbox{ in every }\PP\in\Th\} .
\end{equation}
Then we set 
\begin{equation}\label{effeh}
<f_h,\gv>:=\int_{\Omega}
f\,\T \fv\,\dO \equiv (f,\T\fv)_0,
\end{equation}
and finally,
\begin{equation}
<g_R,\gv>:=\int_{\Gamma_R}g_R\,\fv\,{\rm d}\ell. 
\end{equation}


%

\section{The discretized problem}\label{dispro}


We can now write the discretized version of problem \eqref{proconvar}:

\begin{equation}
\left\{
\begin{aligned}\label{proconvarh}
& \mbox{find } \gu_h\in {\bf G}\{\calV_{k,(g_D,\Gamma_D)}\} \mbox{ such that }\\
& a_h(\gu_h,\gv)
+\int_{\Gamma_R}\Ro\,\fuh\,\fv\,{\rm d}s=<f_h,\gv> + <g_R,\gv>\;\;
\forall \gv\in {\bf G}\{\calV_{k,(0,\Gamma_D)}\}.
\end{aligned}
\right.
\end{equation}

We point out that whenever the exact solution $u$ is in $\Po_k(\Omega)$, the solution $\gu_h$ of
\eqref{proconvarh} will satisfy $\fuh\equiv u$, so that the {\it patch-test of order k} will hold true.

\subsection{Error estimates}
We have the following error estimates.
\begin{thm}\label{th-stimerr}
In the above assumptions, problem \eqref{proconvarh} has a unique 
solution $\gu_h$. Moreover, there exists a constant $C$, depending only on 
the value of $\alpha_*$ in \eqref{stabilitylgf} and on 
the data $\Ro$, ${\pippoz}$  and  ${\pippoi}$,  such that:
if  $u$ is the solution of \eqref{proconvar}, then for every $\gu_I$ in ${\bf G}\{\calV_{k,(g_D,\Gamma_D)}\}$ and for every 
 $u_{\pi}$ elementwise in $\Po_k(\PP)$ we have 
\begin{equation}\label{ErrEst}
\|u-\fuh\|_1\le C\Big(\| \gu_{\pi}-\gu_I\|_{1,S}+\|u-u_{\pi}\|_{1,h}+\|u-{\fuI}\|_1+
{\mathcal E}_1(f) \Big)
\end{equation}
where $\gu_{\pi}={\mathcal G}u_{\pi}$  (see \eqref{gen-pol}),
$\|\cdot\|_{1,h}$ is the "broken norm" in $\prod_{\PP} H^1(\PP)$ and ${\mathcal E}_1(f)$ 
is given by
\begin{equation}\label{defEf}
{\mathcal E}_1(f):=
\sup_{\gv\in {\bf G}\{\calV_{k,(0,\Gamma_D)}\}}\frac{(f,\fv)-<f_h,\gv>}{\|\fv\|_1}\equiv
\sup_{\fv\in\calV_{k,(0,\Gamma_D)}}
\frac{(f,\fv-T\fv)_0}{\| \fv\|_1} .
\end{equation}
\end{thm}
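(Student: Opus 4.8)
The plan is to follow the standard Strang-type argument for Virtual Element Methods, adapted to the ``generator'' formalism introduced in the paper. First I would establish well-posedness of \eqref{proconvarh}. The bilinear form $a_h(\cdot,\cdot)$ together with the Robin term is symmetric and bounded on ${\bf G}\{\calV_{k,0}(\Omega)\}$ (by \eqref{CSh} and the trace inequality on $\Gamma_R$), so by Lax--Milgram it suffices to prove coercivity with respect to some norm on the \emph{generator} space that controls $\|\fv\|_1$. This is exactly where \eqref{ahellipt} enters: it shows that $a_h(\gv,\gv)$ dominates $\sum_i(\delta_i(\gv))^2$, i.e.\ the full Euclidean norm of the generator vector, hence $a_h$ is coercive on ${\bf G}\{\calV_{k,0}(\Omega)\}$ as a finite-dimensional space. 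Combined with \eqref{stabilitylgf}, which gives $\alpha_* a(\fv,\fv)\le a_h(\gv,\gv)$, and with \eqref{Poi} (Poincar\'e on $H^1_{0,\Gamma_D}$), we get $a_h(\gv,\gv)\ge c\,\|\fv\|_1^2$; adding the nonnegative Robin term preserves this. Thus \eqref{proconvarh} has a unique solution $\gu_h$.

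Next I would derive the error estimate. Fix any $\gu_I\in{\bf G}\{\calV_{k,(g_D,\Gamma_D)}\}$ and any $u_\pi$ piecewise in $\Po_k(\PP)$, with $\gu_\pi={\mathcal G}u_\pi$. Write $\boldsymbol{\delta}_h:=\gu_h-\gu_I$, which lies in ${\bf G}\{\calV_{k,(0,\Gamma_D)}\}$ (the affine parts cancel), and note $\widehat{\boldsymbol{\delta}_h}=\fuh-\fuI$. Using the coercivity just established, $c\,\|\widehat{\boldsymbol{\delta}_h}\|_1^2 \le a_h(\boldsymbol{\delta}_h,\boldsymbol{\delta}_h)+\int_{\Gamma_R}\Ro\,\widehat{\boldsymbol{\delta}_h}^2\,\ds$. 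Now expand using the discrete equation \eqref{proconvarh} to replace $a_h(\gu_h,\boldsymbol{\delta}_h)+\int_{\Gamma_R}\Ro\fuh\widehat{\boldsymbol{\delta}_h}$ by $<f_h,\boldsymbol{\delta}_h>+<g_R,\boldsymbol{\delta}_h>$, and then subtract and add the continuous equation \eqref{proconvar} tested against $\widehat{\boldsymbol{\delta}_h}$. This produces the usual four groups of terms: (i) a consistency term $a_h(\gu_I,\boldsymbol{\delta}_h)-a(u,\widehat{\boldsymbol{\delta}_h})$ which I would split elementwise, insert $\gu_\pi$, and handle via the polynomial consistency property \eqref{consistency} (so that $a_h^\PP({\mathcal G}u_\pi,\boldsymbol{\delta}_h)=a^\PP(u_\pi,\widehat{\boldsymbol{\delta}_h})$), leaving terms bounded by $\|\gu_\pi-\gu_I\|_{1,S}\|\boldsymbol{\delta}_h\|_{1,S}$ (via \eqref{CSh}) and by $\|u-u_\pi\|_{1,h}\|\widehat{\boldsymbol{\delta}_h}\|_1$ plus $\pippof^*\|u\|_1$-type continuity bounds; (ii) the Robin term $\int_{\Gamma_R}\Ro(u-\fuI)\widehat{\boldsymbol{\delta}_h}\,\ds$, bounded by $\|\Ro\|_\infty\|u-\fuI\|_1\|\widehat{\boldsymbol{\delta}_h}\|_1$ via trace inequalities; (iii) the load term $(f,\widehat{\boldsymbol{\delta}_h})-<f_h,\boldsymbol{\delta}_h>$, which is precisely bounded by ${\mathcal E}_1(f)\,\|\widehat{\boldsymbol{\delta}_h}\|_1$ by the definition \eqref{defEf}; and the $g_R$ terms cancel since $<g_R,\gv>$ is the exact integral. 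Dividing through by $\|\widehat{\boldsymbol{\delta}_h}\|_1$ and recalling $\|\boldsymbol{\delta}_h\|_{1,S}^2=a_h(\boldsymbol{\delta}_h,\boldsymbol{\delta}_h)$ is controlled by $\|\widehat{\boldsymbol{\delta}_h}\|_1$ only after using coercivity — one must be slightly careful here and absorb $\|\boldsymbol{\delta}_h\|_{1,S}$ on the left using $\|\boldsymbol{\delta}_h\|_{1,S}\le C\|\widehat{\boldsymbol{\delta}_h}\|_1$ is \emph{false} in general, so instead I would keep $\|\boldsymbol{\delta}_h\|_{1,S}$ and observe $a_h(\boldsymbol{\delta}_h,\boldsymbol{\delta}_h)=\|\boldsymbol{\delta}_h\|_{1,S}^2$ appears on \emph{both} sides, using \eqref{stabilitylgf} to bound $\|\widehat{\boldsymbol{\delta}_h}\|_1\le C\|\boldsymbol{\delta}_h\|_{1,S}$. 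So the clean route is: start from $\alpha_* a(\widehat{\boldsymbol{\delta}_h},\widehat{\boldsymbol{\delta}_h})\le a_h(\boldsymbol{\delta}_h,\boldsymbol{\delta}_h)=\|\boldsymbol{\delta}_h\|_{1,S}^2$, express $\|\boldsymbol{\delta}_h\|_{1,S}^2$ via the equations as above, bound each piece by (data)$\times\|\boldsymbol{\delta}_h\|_{1,S}$ or (data)$\times\|\widehat{\boldsymbol{\delta}_h}\|_1\le C(\text{data})\|\boldsymbol{\delta}_h\|_{1,S}$, cancel one factor of $\|\boldsymbol{\delta}_h\|_{1,S}$, and conclude $\|\boldsymbol{\delta}_h\|_{1,S}\le C(\|\gu_\pi-\gu_I\|_{1,S}+\|u-u_\pi\|_{1,h}+\|u-\fuI\|_1+{\mathcal E}_1(f))$. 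Then $\|u-\fuh\|_1\le\|u-\fuI\|_1+\|\widehat{\boldsymbol{\delta}_h}\|_1\le\|u-\fuI\|_1+\alpha_*^{-1/2}\|\boldsymbol{\delta}_h\|_{1,S}$ gives \eqref{ErrEst}.

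The main obstacle I anticipate is the bookkeeping around the two ``faces'' of the variables — making sure that the consistency identity \eqref{consistency} is applied to $\gu_\pi={\mathcal G}u_\pi$ (which is legitimate precisely because $u_\pi$ is a polynomial, per Remark~\ref{Gpol}) and that all continuity estimates are phrased either purely in terms of generators (using $\|\cdot\|_{1,S}$ and \eqref{CSh}) or purely in terms of functions (using $\|\cdot\|_1$, $\|\cdot\|_{1,h}$ and the continuity of $a^\PP$), never mixing them illegitimately. A secondary subtlety is that the stabilization term $\S^\PP$ does \emph{not} satisfy an upper bound of the form \eqref{stabilityG} (Remark~\ref{stabprob}), so one cannot bound $a_h^\PP(\gv,\gv)$ above by $a^\PP(\fv,\fv)$; the term $\S^\PP((\I-{\mathcal G}\Pi^\nabla_k)\gu_I,\cdot)$ must instead be estimated by $\|(\I-{\mathcal G}\Pi^\nabla_k)\gu_I\|_{1,S}$-type quantities and then related to $\|\gu_\pi-\gu_I\|_{1,S}$ by writing $(\I-{\mathcal G}\Pi^\nabla_k)\gu_I=(\I-{\mathcal G}\Pi^\nabla_k)(\gu_I-\gu_\pi)$ — this uses that $(\I-{\mathcal G}\Pi^\nabla_k)\gu_\pi=0$ since $\Pi^\nabla_k$ is a projection onto $\Po_k$ and ${\mathcal G}\Pi^\nabla_k$ reproduces polynomials — and then bounding by $\|\gu_I-\gu_\pi\|_{1,S}$ using that $\S^\PP$ is one of the two terms in $a_h^\PP$. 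This is the step that genuinely exploits the specific structure \eqref{defahP}--\eqref{dofi-dofi} of the method and is where care is most needed.
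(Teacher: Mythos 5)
Your proposal is correct and follows essentially the same route as the paper: the ``clean route'' you settle on after your self-correction --- starting from $\alpha_* a(\hat{\bdelta},\hat{\bdelta})\le a_h(\bdelta,\bdelta)=\|\bdelta\|_{1,S}^2$, inserting $\pm\gu_\pi$ and $\pm u$, invoking \eqref{consistency} for the piecewise polynomial $u_\pi$, bounding each piece by $\|\bdelta\|_{1,S}$ via \eqref{CSh}, \eqref{stabilitylgf} and the trace inequality, cancelling one factor, and finishing with the triangle inequality --- is precisely the paper's argument. Your closing observation that the upper bound in \eqref{stabilityG} is never needed (Cauchy--Schwarz on $a_h$ itself suffices, since everything is measured in $\|\cdot\|_{1,S}$) is exactly how the paper sidesteps the issue raised in Remark~\ref{stabprob}.
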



\begin{proof}
Uniqueness is an immediate consequence of \eqref{ahellipt}.
Let then $\gu_h$ be the solution of \eqref{proconvarh} and let $\gu_I$ be an element of 
${\bf G}\{\calV_{k,(g_D,\Gamma_D)}(\Omega)\}$. 
We set 
\begin{equation}\label{defdeltah}
\bdelta:=\gu_h-\gu_I, 
\end{equation} 
and we remind that
\begin{equation}\label{deltazero}
\bdelta\in {\bf G}\{\calV_{k,(0,\Gamma_D)}\}  \mbox{ while }\quad \bdeltah\in H^1_{0,\Gamma_D},
\end{equation} 
implying in particular, from \eqref{Poi} and \eqref{stabilitylgf},
\begin{equation}\label{confronto-norme}
\|\bdeltah\|^2_1\le C\,a(\bdeltah,\bdeltah)\le C\|\bdelta\|^2_{1,S}.
\end{equation}
To simplify the notation (and make it similar to more classical ones) we also set
\begin{equation}
u_h:=\fuh,\qquad u_I:=\fuI,\qquad \deltas:=\bdeltah\mbox{ and }\quad[v,w]_{\Gamma_R}:=\int_{\Gamma_R}\rho\,v\,w\,{\rm d}s .
\end{equation}
We point out in advance that, since $u_{\pi}$ is piecewise in $\Po_k$, from \eqref{consistency} we have
\begin{equation}\label{split}
a_h(\gu_{\pi},\bdelta)=\sum_{\E}a^{\E}(u_{\pi},\deltas),
\end{equation}
and moreover that (using classical trace theorems and then \eqref{confronto-norme}) there exists a constant 
$C$, independent of $h$, such that for all $\gv\in {\bf G}\{\calV_{k,(0,\Gamma_D)}\}$ 
\begin{equation}\label{trace}
\|\fv\|_{0,\Gamma_R}\le C \|\fv\|_1\le C\|\gv\|_{1,S}.
\end{equation}
Then we proceed using (successively): \eqref{triplebarnorm},\eqref{defdeltah}; rearranging terms; \eqref{proconvarh} and $\pm \gu_{\pi}$;  \eqref{split}; \eqref{effeh},   $\pm u$ (twice) and \eqref{splita}; rearranging;   
\eqref{proconvar}; \eqref{defEf}, \eqref{CSh}, \eqref{CS}; finally \eqref{confronto-norme}  and \eqref{trace}. We get:
\begin{equation}\label{stimerr}
\begin{aligned}
&\|\bdelta\|^2_{1,S}=a_h(\bdelta,\bdelta)\le a_h(\bdelta,\bdelta)\!
+[\deltas,\deltas]_{\Gamma_R}=\!a_h(\gu_h,\bdelta)-a_h(\gu_I,\bdelta)+[u_h,\deltas]_{\Gamma_R}-[u_I,\deltas]_{\Gamma_R}\\[2mm] 
&=\!a_h(\gu_h,\bdelta)+[u_h,\deltas]_{\Gamma_R}-a_h(\gu_I,\bdelta)-[u_I,\deltas]_{\Gamma_R}\\[2mm]
&={\color{black}<f_h,\bdelta>}+\!<g_R,\deltas>\!-\! a_h(\gu_I-\gu_{\pi},\bdelta)-a_h(\gu_{\pi},\bdelta)-[u_I,\deltas]_{\Gamma_R}\\[2mm]
&=<f_h,{\bdelta}>+\!<g_R,\deltas>\!-\!a_h(\gu_I-\gu_{\pi},\bdelta)-\sum_{\E}a^\E(\up,\deltas)
-[u_I,\deltas]_{\Gamma_R}
\\
&=(f,\T\deltas)_0+\!<g_R,\deltas>\!-\! a_h(\gu_I-\gu_{\pi},\bdelta)-\sum_{\E}a^\E(\up-u,\deltas)\\
&\hskip2cm-a(u,\deltas)
-[u_I-u,\deltas]_{\Gamma_R}-[u,\deltas]_{\Gamma_R}
\\
&=(f,\T\deltas)_0+\!<g_R,\deltas>\!-a(u,\deltas)-[u,\deltas]_{\Gamma_R}-\! a_h(\gu_I-\gu_{\pi},\bdelta)-\sum_{\E}a^\E(\up-u,\deltas)\\
&\hskip2cm
-[u_I-u,\deltas]_{\Gamma_R}
\\
&=(f,\T\deltas-\deltas)_0\!\!-a_h(\gu_I-\gu_{\pi},\bdelta)-\sum_{\E}a^\E(\up-u,\deltas)
-[u_I-u,\deltas]_{\Gamma_R}
\\
&\le \,C\Big({\mathcal E}_1(f)\|\theta\|_1+\| \gu_I-\gu_{\pi} \|_{1,S}\|\bdelta\|_{1,S}+\|u-\up\|_{1,h}\|\theta\|_1+\|u_I-u\|_{0,\Gamma_R}\|\theta\|_{0,\Gamma_R}\Big)\\
&\le\,C\Big({\mathcal E}_1(f)
+\| \gu_I-\gu_{\pi} \|_{1,S}
+\|u-\up\|_{1,h}+\|u_I-u\|_{1}\Big)\,\|\bdelta\|_{1,S} .
\end{aligned}
\end{equation}
Thus, 
\begin{equation}
\|\gu_h-\gu_I\|_{1,S}\le\,C\Big({\mathcal E}_1(f)
+\| \gu_I-\gu_{\pi} \|_{1,S}
+\|u-\up\|_{1,h}+\|u_I-u\|_{1}\Big) .
\end{equation}
The result follows by the triangle inequality.
\end{proof}

\bigskip


\section{Interpolation estimates}\label{interr}

Looking at Theorem \ref{th-stimerr} we see that in order to get the final error estimate in the usual terms of {\it powers of $h$ and regularity of the solution} we need to choose $\gu_I$ and $\gu_\pi$ and then estimate (for them) the quantities
\begin{equation}\label{dastimare}
\|u-u_{\pi}\|_{1,h},\qquad \|u-u_I\|_{1}\qquad\|{\gu_{\pi}-\gu_I}\|_{1,S},\qquad\mbox{ and } {\mathcal E}_1(f).
\end{equation}
As we shall see, the first two terms and the last one in   \eqref{dastimare} could be treated in a reasonably standard way. The third, however, poses some problems, as the norm $\|\cdot\|_{1,S}$ takes into account the choice of the stabilization, and, as we shall see, care has to be taken in order to choose a $\gu_I$
and a $\gu_{\pi}$ that make the third term small without affecting the accuracy of the other two.


We make the following mesh assumption.

\begin{itemize}
\item[{\bf A)}] The boundary of each element $\calP$ is piecewise $C^1$. Moreover we assume the existence of a positive constant $\newthetas$  
such that all elements $\calP$ of the mesh are star-shaped with respect to a ball $B_\calP$ of radius 
$\rho_\calP 
\ge \newthetas h_\calP$ (where, as usual, $h_\calP$ is the diameter of $\calP$). 
\item[{\bf B)}] There exists a positive constant $\newthetas_1$ such that the length of each edge of $\PP$ is $\ge \newthetas_1 h_{\PP}$.
\end{itemize}

\subsection{More details on the stabilization}\label{sec:Details}

Before going to the study of the interpolation errors we must spend some more attention to the choice of the stabilization term (that enters in the definition of the $\|\cdot\|_{1,S}$ norm).  For simplicity we will only deal with elements with a curved edge, since the case of polygonal elements has already been treated in {\color{black}
\cite{BLR-stab}, \cite{Brenner}}.

Moreover, always for simplicity,  we fix our attention on the {\it dofi-dofi} stabilization \eqref{dofi-dofi}. Under our assumptions on the mesh the validity of \eqref{stabilityl}, that is, estimates from below,  can be easily proved with the techniques used in \cite{BLR-stab}, \cite{Brenner}. Therefore we only have to deal with estimates from above.

We already pointed out in Remark \ref{daunaparte} that the degrees of freedom shared by two (or more) elements do not need to be stabilized independently in each element, but, in the {\it dofi-dofi} stabilization,
 they could be considered only {\bf once} (in either one of the two elements). 

We also pointed out, in Remark \ref{stabprob}, that troubles may arise only when considering the tgp associated with curved edges that separate two elements having a (quite) different material coefficient $\pippof$. Indeed, as we shall see in a little while, when a curved edge separates two elements (say, $\PP$ and $\PP^\prime$) such that the exact solution 
$u$ is globally regular on $\overline{\PP}\cup\overline{\PP^\prime}$ the estimate can be performed following classical arguments. However, in the presence of a jump in the coefficient $\pippof$, the exact solution will have a jump in the normal derivative at the interface,
so that its global regularity on ${\PP}\cup\gamma\cup{\PP^\prime}$  cannot be better than $H^{3/2-\varepsilon}$ with $\varepsilon > 0$.
Indeed, as the tgp attached to the curved interface $\gamma$, seen from $\PP$ and from $\PP^{\prime}$, are {\it the same}, then the value of $u_I$ (to be chosen) in a tgp cannot be close, at the same time, to the value of $u$ in one element and to a smooth extension of the values that 
$u$ assumes in the other element, and this is the reason why we {\bf must} stabilize the tgp of the interface $\gamma$ only {\bf once}.

Hence for every curved edge $\gamma$ that separates two elements $\PP$ and   $\PP^{\prime}$ having a different value of 
$\pippof$ we choose, once and for all, one of the two elements (say, $\PP$) and we consider the values of the 
tgp, in the {\it dofi-dofi} stabilization, only when dealing with the element $\PP$. In the description below, we will assume, {\it to simplify  the exposition}, that the element $\PP$ chosen for the stabilization is the one that contains "more tg points", and that when considering the values of $u$ at tg points not belonging to $\PP$ we will actually consider a smooth extension of $u_{|\PP}$.
Note that this extension has to be done just when proving estimates, and (obviously) {\bf not} in the code.

\subsection{Construction of $u_I$ and $u_{\pi}$ - Standard  case}

For the sake of clarity, we will first discuss the simpler case where either the curved edge $\gamma$ belongs to
$\partial \Omega$ or $\gamma$ is internal to
$\Omega$ but the material coefficient $\pippof$ varies smoothly from one element to the other. As we have said already, in such cases we don't have to worry about the effects of the stabilizing terms, and we might assume that
(for simplicity) the stabilization is done, independently, in each of the two elements.

In this case (that we denoted as {\it Standard Case}) we proceed as follows. We set: 

$$\widetilde{\calP}:=\calP\cup\gamma\cup\calP^{\prime}$$
when the curved edge $\gamma$ is shared by two elements, and
$$\widetilde{\calP}:=\calP$$
otherwise. We note that, always in the standard case, the exact solution $u$ will be smooth in $\widetilde{\calP}$. Then we take a polynomial $q_k$ living on $\widetilde{\calP}$ such that $q_k(\nu)=u(\nu)$ at the two endpoint of $\gamma$ and 
\begin{equation}\label{stimau-qk}
\| u - q_k \|_{r,\widetilde{\calP}} \le C h_{\widetilde{\calP}}^{t-r} |u|_{t,\calP} \;\mbox{for all real numbers $0 \le r \le t \le k+1$ and $t>1$}.
\end{equation}
 Due to assumption {\bf A} it can be checked that such polynomial exists. Then we take, in both elements 
$\calP$ and $\calP^{\prime}$ 
\begin{equation}
u_\pi\equiv q_k ,
\end{equation} 
and this, using  \eqref{gen-pol}, defines $\gu_{\pi}$ as ${\mathcal G}u_{\pi}$.

The  local interpolant $u_I$, defined through its generators $\gu_I$, will be constructed separately in $\calP$ and $\calP^{\prime}$. We will describe its construction in $\calP$, and estimate $u-u_I$ in $\calP$. The construction and the estimate in $\calP^\prime$ are done exactly in the same way.  

We take $u_I (\nu) = u(\nu)$ for all points $\nu$ that correspond to a vertex of $\calP$ or to a Gauss-Lobatto node of a straight edge of $\calP$, and
%
\begin{equation}\label{uIgamma}
{u_I}_{|\gamma} = {q_k}_{|\gamma}.
\end{equation} 
Inside $\calP$ we require $\Delta u_I = \Pi^0_{k-2}(\Delta u)$.  This defines a $u_I$ as an element of ${\mathcal V}_k(\PP)$. In view of Remark \ref{Gpol}, in order to define $\gu_I$ we only need to prescribe its values at the tgp:
\begin{equation}\label{uInodi}
\gu_I (\nu) = q_k(\nu) \quad \mbox{at all the tgp of the curved edge $\gamma$}.
\end{equation}
 Now we have to prove the interpolation estimate. In the following $C$ will denote as usual a generic constant, uniform in the mesh size and shape, that can possibly change at each occurrence. In what follows all the Sobolev norms on $\partial\calP$ are, as usual, defined with respect to the arclength parametrization.

Before delving into the interpolation proofs, we need to introduce some definition and two simple technical lemmas. 
Let the scaled norms (for all $0 \le \eps < 1/2$)
$$
\begin{aligned}
& \tri v \tri_{1/2+\eps,\partial \calP} = h_{\PP}^{-1/2-\eps} \| v \|_{0,\partial \calP} + | v |_{1/2+\eps,\partial \calP} \\
& \tri v \tri_{1+\eps,\calP} = h_{\PP}^{-1-\eps} \| v \|_{0,\calP} + | v |_{1+\eps,\calP} \: .
\end{aligned}
$$
Under the hypothesis {\bf A}, following Lemma 3.3 in \cite{rozzi}, for every element $\calP$ one can build a $W^{1,\infty}$ mapping ${\bf F}_\calP$ from $B_\calP$ into $\calP$, with inverse ${\bf F}_\calP^{-1}$ also in $W^{1,\infty}$, in such a way that the $W^{1,\infty}$ norm of ${\bf F}_\calP$ and ${\bf F}_\calP^{-1}$ are uniformly bounded independently of $\calP$. 
{\color{black}
\begin{lem}\label{lemma8.1}
Let assumption {\bf A} hold. Then there exists a uniform constant C such that (for all elements $\calP$)
\begin{equation}\label{L:prel-2}
\begin{aligned}
& | v |_{1,\calP} \le C |v|_{1/2,\partial \calP} \qquad \forall v \in H^{1}(\calP) \textrm{ with } \Delta v = 0 \: , \\
& \tri v \tri_{1,\calP} \le C \tri v \tri_{1/2,\partial \calP} \qquad \forall v \in H^{1}(\calP)
\textrm{ with } \Delta v = 0 \: .
\end{aligned}
\end{equation}
\end{lem}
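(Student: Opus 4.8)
The plan is to prove the two estimates separately, but both follow the same strategy: reduce a harmonic‐function estimate on the (curved, shape‐irregular) element $\calP$ to an estimate on the model ball $B_\calP$, where classical elliptic regularity and trace theory on a smooth fixed domain apply, using the $W^{1,\infty}$ mapping ${\bf F}_\calP$ supplied by Lemma 3.3 of \cite{rozzi}. The key point is that since $\|{\bf F}_\calP\|_{W^{1,\infty}}$ and $\|{\bf F}_\calP^{-1}\|_{W^{1,\infty}}$ are bounded independently of $\calP$, both $H^{1/2}$ (semi)norms on $\partial\calP$ and $H^1$ (semi)norms on $\calP$ are equivalent, up to uniform constants, to the corresponding quantities for $v\circ {\bf F}_\calP$ on the fixed domain $B_1$ (the unit ball). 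Note however that $B_\calP$ has radius $\rho_\calP\simeq h_\calP$, so before transporting to the unit ball one rescales by $h_\calP$; this rescaling is exactly what the scaled norms $\tri\cdot\tri$ are designed to absorb, which is why the second inequality is stated in the scaled norms and the first (a pure seminorm estimate, scale‐invariant) is not.

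First I would prove the first inequality. Let $v\in H^1(\calP)$ with $\Delta v=0$. Pull back along ${\bf F}_\calP$ and rescale to the unit ball: set $\hat v := v\circ {\bf F}_\calP$ composed with the dilation, so $\hat v\in H^1(B_1)$. Because ${\bf F}_\calP$ is only $W^{1,\infty}$, $\hat v$ need not be harmonic, but it satisfies a uniformly elliptic equation $\mathrm{div}(A\nabla\hat v)=0$ with $A$ measurable, symmetric, uniformly bounded and uniformly elliptic (the coefficients coming from $D{\bf F}_\calP$). For such equations one has the a priori bound $|\hat v|_{1,B_1}\le C\,|\hat v|_{1/2,\partial B_1}$: indeed, given boundary data $g\in H^{1/2}(\partial B_1)$, lift it to $\tilde g\in H^1(B_1)$ with $\|\tilde g\|_{1,B_1}\le C|g|_{1/2,\partial B_1}+C\|g\|_{0,\partial B_1}$, and then use that the energy solution minimizes $\int_{B_1}A\nabla w\cdot\nabla w$ over $w$ with the given trace, so $\int A\nabla\hat v\cdot\nabla\hat v\le \int A\nabla\tilde g\cdot\nabla\tilde g\le C\|\tilde g\|^2_{1,B_1}$; this gives a full $H^1$ norm bound, and the seminorm version then follows by a compactness/Poincar\'e argument after quotienting out constants (constants have zero trace seminorm and zero gradient, so the estimate descends to $H^1/\mathbb R$ versus $H^{1/2}/\mathbb R$, where a standard Peetre–Tartar argument closes it). Transporting back with the uniform equivalence of norms and using scale invariance of the seminorm estimate yields $|v|_{1,\calP}\le C|v|_{1/2,\partial\calP}$ with $C$ uniform.

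Then the second inequality follows quickly: by the first one, $\tri v\tri_{1,\calP}=h_\PP^{-1}\|v\|_{0,\calP}+|v|_{1,\calP}\le h_\PP^{-1}\|v\|_{0,\calP}+C|v|_{1/2,\partial\calP}$, so it remains to bound $h_\PP^{-1}\|v\|_{0,\calP}$ by $C\tri v\tri_{1/2,\partial\calP}=C(h_\PP^{-1/2}\|v\|_{0,\partial\calP}+|v|_{1/2,\partial\calP})$. For this, write $v=\bar v+(v-\bar v)$ where $\bar v$ is the mean of $v$ over $\calP$ (or over $\partial\calP$); by a scaled Poincar\'e inequality on $\calP$ (valid with uniform constant under assumption {\bf A}, since $\calP$ is star‐shaped w.r.t. a ball of radius $\simeq h_\calP$) one gets $h_\PP^{-1}\|v-\bar v\|_{0,\calP}\le C|v|_{1,\calP}\le C|v|_{1/2,\partial\calP}$, and the constant part is controlled by $h_\PP^{-1}\|\bar v\|_{0,\calP}\simeq h_\PP^{-1/2}|\bar v|\,|\calP|^{1/2}h_\PP^{-1/2}\cdot h_\PP^{1/2}\lesssim h_\PP^{-1/2}\|v\|_{0,\partial\calP}$ after comparing the two means via the trace/Poincar\'e inequality again. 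Collecting terms gives the claim.

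The main obstacle, in my view, is not any single deep estimate but the bookkeeping of \emph{uniformity}: one must make sure every intermediate inequality (the trace lifting on $B_1$, the Poincar\'e inequality on $\calP$, the norm equivalence under ${\bf F}_\calP$, the scaling in $h_\calP$) has a constant depending only on the star‐shapedness parameter $\newthetas$ from assumption {\bf A} and the uniform $W^{1,\infty}$ bound on ${\bf F}_\calP$, and not on the (possibly wild) geometry of the curved edge. A secondary subtlety is the non‐harmonicity of $\hat v$ after pull‐back — one cannot use the mean‐value property or explicit Poisson kernels, and must instead argue purely variationally/energetically for the uniformly elliptic equation, which is why I would phrase the interior bound as an energy‐minimization estimate rather than invoking harmonic‐function machinery on $B_1$.
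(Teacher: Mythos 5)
Your proposal is correct and follows essentially the same route as the paper: both arguments rest on the Dirichlet-energy minimization property of harmonic functions combined with the uniformly bi-Lipschitz map ${\bf F}_\calP$ to the ball, where the classical trace and Poincar\'e estimates hold with fixed constants. The only (immaterial) differences are that the paper minimizes the energy of $v$ on $\calP$ itself, using the pull-forward of the harmonic extension on $B_\calP$ as competitor, while you pull $v$ back to the ball and minimize the resulting variable-coefficient energy there; and for the $L^2$ part of the scaled norm the paper invokes the Ne\v{c}as inequality $\| w \|_{0,\omega} \le C (|w|_{1,\omega} + \| w \|_{0,\partial\omega})$ scaled to $B_\calP$, where you perform the equivalent mean-value decomposition by hand.
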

\begin{proof}
Given any $v \in H^1(\calP)$ with $\Delta v =0$, we denote by $\tilde v$ the only function in $H^1(B_\calP)$ with $\Delta \tilde v = 0$ and such that $\tilde v = v \circ {\bf F}_\calP$ on $\partial B_\calP$. We recall that the map ${\bf F}_\calP$ and its inverse are uniformly in $W^{1,\infty}$.
First using that $v$ is harmonic in $\calP$ (and thus minimizes the $H^1$ semi-norm among all $H^1$ functions with same boundary values), then recalling that $\tilde v$ is harmonic in $B_\calP$, 
the first bound in \eqref{L:prel-2} follows
\begin{equation}\label{newnew3}
| v |_{1,\calP} \le | \tilde v \circ {\bf F}_\calP^{-1}  |_{1,\calP} \le C | \tilde v  |_{1,B_\calP}
\le C | \tilde v  |_{1/2,\partial B_\calP} = C | v \circ {\bf F}_\calP |_{1/2,\partial B_\calP}
\le C |v|_{1/2,\partial \calP} . 
\end{equation}
It is well known, see for instance \cite{Necas}, that for all Lipschitz domains $\omega$ there exists a constant $C$ such that 
$\| w \|_{0,\omega} \le C (|w|_{1,\omega} + \| w \|_{0,\partial\omega})$ for all $w$ in $H^1(\omega)$. By a standard scaling argument with the unitary ball, the previous result immediately yields (for any element $\calP$)
\begin{equation}\label{newnew1}
\rho_\calP^{-1} \| w \|_{0,B_\calP} \le C( |w|_{1,B_\calP} + \rho_\calP^{-1/2} \| w \|_{0,\partial B_\calP})
\quad \forall w \in H^1(B_\calP)
\end{equation}
with $C$ independent of $\calP$. By mapping to the ball, applying result \eqref{newnew1} and mapping back to $\calP$ (we also recall that $\rho_{\calP} \sim h_\calP$) we obtain
\begin{equation}\label{newnew2}
\begin{aligned}
h_\calP^{-1} \| v \|_{0,\calP} 
& \le  \rho_\calP^{-1} \| v \circ {\bf F}_\calP^{-1} \|_{0,B_\calP}  
\le  C (|v \circ {\bf F}_\calP^{-1}|_{1,B_\calP} + \rho_\calP^{-1/2} \| v \circ {\bf F}_\calP^{-1} \|_{0,\partial B_\calP}) \\
& \le C( |v|_{1,\calP} + h_\calP^{-1/2} \| v \|_{0,\partial \calP}) .
\end{aligned}
\end{equation}
The second bound in \eqref{L:prel-2} follows from the definition of the $\tri\cdot\tri$ norms, \eqref{newnew2} and \eqref{newnew3}.
\end{proof}
}

\begin{lem}
Let assumption {\bf A} hold. Then there exists a uniform constant C such that (for all elements $\calP$)
\begin{equation}\label{newnew5}
\| v \|_{0,\calP} \le C \Big(h_\calP | v |_{1,\calP} +\Big|\int_{\partial \PP} v \Big| \Big)
\qquad \forall v \in H^1(\calP) .
\end{equation}
\end{lem}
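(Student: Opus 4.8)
The plan is to prove the Poincaré--Wirtinger-type inequality \eqref{newnew5} by a scaling argument through the uniform bi-Lipschitz map ${\bf F}_\calP$ furnished just above, reducing everything to the fixed unit ball. First I would recall the classical result that on the unit ball $B$ there is a constant $C$ with
\[
\| w \|_{0,B} \le C\Big( |w|_{1,B} + \Big| \int_{\partial B} w \, {\rm d}s \Big| \Big) \qquad \forall w \in H^1(B).
\]
This is standard: the functional $w \mapsto |w|_{1,B} + |\int_{\partial B} w|$ is a norm on $H^1(B)$ equivalent to the full $H^1$-norm, because if it vanishes then $w$ is constant (from $|w|_1 = 0$) and that constant is zero (from the vanishing of the boundary average), and the equivalence then follows from a compactness/Peetre--Tartar argument (or one simply cites it as a known Poincaré--Wirtinger inequality with boundary-mean normalization).

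Next I would transport this to a general element $\calP$. Given $v \in H^1(\calP)$, set $w := v \circ {\bf F}_\calP \in H^1(B_\calP)$, rescale $B_\calP$ to the unit ball $B$ (absorbing the factor $\rho_\calP \sim h_\calP$), apply the unit-ball inequality, and map back. Under this pull-back the $L^2$ norm on $\calP$ is comparable to $\rho_\calP$ times the $L^2$ norm of $w$ on $B$ (Jacobian of a uniformly bi-Lipschitz map in two dimensions scales like $\rho_\calP^{2}$, and $\|v\|_{0,\calP}^2 = \int_\calP v^2 \sim \rho_\calP^{2}\int_{B_\calP/\rho_\calP} w^2$), the $H^1$-seminorm is comparable to $|w|_{1,B_\calP/\rho_\calP}$ (the $\rho_\calP$ from the $L^2$ measure cancels the $\rho_\calP^{-1}$ from the chain rule on the gradient), and the boundary integral $\int_{\partial\PP} v\,{\rm d}s$ is comparable, up to a uniformly bounded factor of the arclength Jacobian, to $\rho_\calP$ times $\int_{\partial(B_\calP/\rho_\calP)} w\,{\rm d}s$. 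Since all the $W^{1,\infty}$ norms of ${\bf F}_\calP$ and ${\bf F}_\calP^{-1}$ are uniformly bounded, every comparison constant is uniform in $\calP$. Collecting the powers of $\rho_\calP$ and using $\rho_\calP \sim h_\calP$ yields \eqref{newnew5}. This is exactly the same template as the proof of the preceding Lemma~\ref{lemma8.1} (compare the derivation of \eqref{newnew1}--\eqref{newnew2}), so I would mimic its structure.

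The only mild subtlety — and the step I would treat with the most care — is the behaviour of the \emph{boundary} term under the bi-Lipschitz change of variables: unlike the interior measure, the arclength element on $\partial\calP$ is transformed by a factor that is merely bounded above and below (not a smooth Jacobian), but this is enough since one only needs two-sided comparability $\big|\int_{\partial\calP} v\big| \sim \rho_\calP \big|\int_{\partial(B_\calP/\rho_\calP)} w\big|$ up to uniform constants, which follows from ${\bf F}_\calP, {\bf F}_\calP^{-1} \in W^{1,\infty}$ uniformly together with assumption~{\bf A}. Everything else is bookkeeping of scaling exponents identical to that already carried out in Lemma~\ref{lemma8.1}.
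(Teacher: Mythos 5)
There is a genuine gap, and it sits exactly at the step you flagged as a ``mild subtlety'' and then dismissed. The claimed two-sided comparability
$\big|\int_{\partial\calP} v\big| \sim \rho_\calP \big|\int_{\partial(B_\calP/\rho_\calP)} w\big|$
is false for a \emph{signed} integral under a merely bi-Lipschitz change of variables. Writing $J$ for the tangential Jacobian (bounded above and below but not constant), you have $\int_{\partial\calP} v = \int_{\partial B_\calP} w\,J$, and the discrepancy $\int_{\partial B_\calP} w(J-1)$ is controlled only by $\|w\|_{L^1(\partial B_\calP)}$, not by $\big|\int_{\partial B_\calP} w\big|$: the two signed averages can differ by a quantity of the size of the trace norm of $w$, and can even have opposite signs or one can vanish while the other does not. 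If you try to repair this with a trace inequality, the error term reappears as $C\|w\|_{0,B}$ with a constant proportional to $\|J-1\|_{\infty}$, which is $O(1)$ and cannot be absorbed into the left-hand side. So the transport of the unit-ball inequality, as written, does not close.

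The paper circumvents precisely this by never mapping the signed boundary integral. It splits $v=\overline v+(v-\overline v)$ with $\overline v$ the average of $v$ on $\calP$; the constant part is handled by the exact identity $\|\overline v\|_{0,\PP}^2=\frac{|\PP|}{|\partial\PP|^2}\big(\int_{\partial\PP}\overline v\big)^2$ together with the mesh assumption $|\PP|\le C|\partial\PP|^2$; the fluctuation is handled by the standard Poincar\'e--Wirtinger bound $\|v-\overline v\|_{0,\PP}\le Ch_\PP|v|_{1,\PP}$; and the passage from $\int_{\partial\PP}\overline v$ to $\int_{\partial\PP} v$ uses Cauchy--Schwarz plus a scaled trace inequality applied to $\|v-\overline v\|_{0,\partial\calP}$ --- an $L^2$ boundary norm, which, unlike the signed integral, does transform comparably under the bi-Lipschitz map. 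If you carry out this average-splitting (either on $\calP$ directly, as the paper does, or on the ball with the weighted functional $w\mapsto\big|\int_{\partial B} wJ\big|$), your scaling bookkeeping for the interior terms is fine and the proof goes through; without it, the argument is incomplete.
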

\begin{proof}
We observe first that, under our mesh assumptions  it holds
\begin{equation}\label{constant}
\|\varphi\|_{0,\PP}\le C \Big| \int_{\partial \PP} \varphi \Big| \quad \forall \varphi \mbox{ constant}.
\end{equation}
Indeed:
\begin{equation*}
\|\varphi\|^2_{0,\PP}=|\PP| \varphi^2 =\frac{|P|}{|\partial \PP|^2}\Big(\int_{\partial \PP}\varphi\Big)^2\le C \Big(\int_{\partial \PP}\varphi\Big)^2 ,
\end{equation*}
with $C$ independent of $h_{\PP}$.
For $v\in H^1(\PP)$, let $\overline v$ be its average on $\calP$. By standard approximation properties  and \eqref{constant} we have
\begin{equation}\label{uno}
\|v\|_{0,\PP}\le \|v -\overline v\|_{0,\PP} +\|\overline v\|_{0,\PP}\le C\Big( h_{\PP}|v|_{1,\PP}+\Big| \int_{\partial \PP} \overline v\Big|).
\end{equation}
Next, by adding and subtracting $v$ and the usual argument mapping ``to and from'' the ball $B_\calP$ combined with a scaled trace inequality on the ball, we can easily derive
\begin{equation}\label{due}
\begin{aligned}
\Big| \int_{\partial \PP} \overline v\Big|&\le \Big| \int_{\partial \PP} (\overline v-v)\Big| +\Big| \int_{\partial \PP} v\Big|
 \le C h_\calP^{1/2} \| \overline v-v \|_{0,\partial\calP}+\Big| \int_{\partial \PP} v\Big|\\
& \le C h_\calP^{1/2}\Big(h_{\PP}^{-1/2} \| \overline v-v \|_{0,\calP}+h_{\PP}^{1/2}|v|_{1,\PP}\Big)+\Big| \int_{\partial \PP} v\Big|\\
&\le C h_\calP | v |_{1,\calP} +\Big| \int_{\partial \PP} v\Big|.
\end{aligned}
\end{equation}
Inserting \eqref{due} in \eqref{uno} gives the result.
\end{proof}
\begin{cor}
An obvious consequence of \eqref{newnew5} is the Poincar\'e-type inequality
\begin{equation}\label{newnew6}
\| v \|_{0,\calP} \le C h_\calP | v |_{1,\calP} 
\qquad \forall v \in H^1(\calP) \textrm{ with } \int_{\partial\calP} v = 0 .
\end{equation}
\end{cor}
\begin{prop}\label{interp-prop}
Let assumption {\bf A} hold. Then there exists a positive constant $C$ such that for all $\calP \in \Th$ and any
function $u \in H^{s}(\calP)$, $s \ge 2$, 
it holds
\begin{equation}\label{stimau-uI}
| u - u_I |_{1,\calP} \le 
C h_\calP^{s-1} |u|_{s,\calP} \: .
\end{equation}
\end{prop}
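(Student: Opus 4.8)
The plan is to estimate $|u - u_I|_{1,\calP}$ by writing $u - u_I = (u - q_k) + (q_k - u_I)$, where $q_k$ is the polynomial from \eqref{stimau-qk}, and controlling each piece separately. The first piece is handled directly by the polynomial approximation estimate \eqref{stimau-qk} with $r=1$, $t=s$, giving $|u - q_k|_{1,\calP} \le C h_\calP^{s-1}|u|_{s,\calP}$. So the real work is the second piece $w := q_k - u_I$, which lies in $\calV_k(\calP)$ and is characterized by: it vanishes at all vertices and straight-edge Gauss-Lobatto nodes of $\calP$ (since $u_I$ matches $u=q_k$ there — wait, actually $u_I(\nu)=u(\nu)$, not $q_k(\nu)$, at vertices), so more carefully $w$ has vertex/edge values $q_k(\nu)-u(\nu)$; its trace on $\gamma$ is zero by \eqref{uIgamma}; and $\Delta w = \Delta q_k - \Pi^0_{k-2}(\Delta u)$ inside $\calP$.

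First I would split $w$ further as $w = w_\partial + w_0$, where $w_\partial \in \calV_k(\calP)$ is harmonic with the same boundary trace as $w$, and $w_0 \in \calV_k(\calP)$ has zero boundary trace with $\Delta w_0 = \Delta w$. For $w_0$: since $\Delta w_0 = \Delta q_k - \Pi^0_{k-2}(\Delta u) = -(\Delta u - \Delta q_k) + (\Delta u - \Pi^0_{k-2}\Delta u)$ and $w_0 \in H^1_0(\calP)$, an integration by parts gives $|w_0|_{1,\calP}^2 = -\int_\calP w_0 \Delta w_0 \le \|w_0\|_{0,\calP}\|\Delta w_0\|_{0,\calP}$; combining with the Poincar\'e inequality $\|w_0\|_{0,\calP} \le C h_\calP |w_0|_{1,\calP}$ (which follows from \eqref{newnew6} since $w_0=0$ on $\partial\calP$) yields $|w_0|_{1,\calP} \le C h_\calP \|\Delta w_0\|_{0,\calP}$. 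The term $\|\Delta u - \Delta q_k\|_{0,\calP} = |u-q_k|_{2,\calP}$ is $\le C h_\calP^{s-2}|u|_{s,\calP}$ by \eqref{stimau-qk}, and $\|\Delta u - \Pi^0_{k-2}\Delta u\|_{0,\calP}$ — here we use that $\Pi^0_{k-2}\Delta q_k = \Delta q_k$ since $\Delta q_k \in \Po_{k-2}$, so this equals $\|(\mathrm{I}-\Pi^0_{k-2})(\Delta u - \Delta q_k)\|_{0,\calP} \le \|\Delta u - \Delta q_k\|_{0,\calP}$, again $\le C h_\calP^{s-2}|u|_{s,\calP}$; so $|w_0|_{1,\calP} \le C h_\calP^{s-1}|u|_{s,\calP}$.

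For the harmonic part $w_\partial$, I would apply the first bound of Lemma \ref{lemma8.1}: $|w_\partial|_{1,\calP} \le C |w_\partial|_{1/2,\partial\calP} = C|w|_{1/2,\partial\calP}$ (the traces agree, and one must also absorb $w_0$ into the trace bound — actually $w_\partial$ has trace $w - w_0|_{\partial\calP} = w|_{\partial\calP}$ since $w_0$ vanishes on $\partial\calP$). Now $w|_{\partial\calP}$ is zero on $\gamma$ and equals $q_k - u_I$ on the straight edges, which in turn — since $u_I$ on a straight edge is the degree-$k$ Lagrange interpolant of $u$ at the Gauss–Lobatto nodes — means $w$ restricted to a straight edge is $q_k$ minus that interpolant of $u$, i.e. $(\mathrm{I} - \text{interp})(q_k - u) + (q_k - \text{interp}(q_k))= (\mathrm{I}-\text{interp})(q_k-u)$ since $q_k$ is interpolated exactly. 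Using standard one-dimensional interpolation error bounds on each edge (fractional Sobolev norms on $\partial\calP$, scaled by edge length $\sim h_\calP$ via assumption {\bf B}) together with trace inequalities from $H^s(\calP)$ down to the boundary, one gets $|w|_{1/2,\partial\calP} \le C h_\calP^{s-1}|u|_{s,\calP}$ — the continuity across vertices is automatic because $w$ is continuous on $\partial\calP$ by construction. Assembling all three contributions gives \eqref{stimau-uI}.

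\textbf{Main obstacle.} The delicate point is the boundary term $|w|_{1/2,\partial\calP}$: one must control a fractional-order seminorm of a piecewise polynomial on $\partial\calP$ that is glued from the edge interpolation errors and the identically-zero trace on $\gamma$. The subtlety is that the $H^{1/2}(\partial\calP)$ seminorm is genuinely nonlocal, so bounding it requires more than summing edgewise $H^{1/2}$ seminorms; one needs the fact that $w$ is globally continuous along $\partial\calP$ (so no spurious jump contributions at vertices) and then either an interpolation-space argument between $L^2$ and $H^1$ on $\partial\calP$ or a direct estimate using that $w$ vanishes at all vertices and at the Gauss–Lobatto nodes of straight edges. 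I would route this through the scaled norms $\tri\cdot\tri$ and the existence of the uniform $W^{1,\infty}$ map ${\bf F}_\calP$ so that all constants stay mesh-independent, exactly in the spirit of Lemma \ref{lemma8.1}.
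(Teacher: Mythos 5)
Your reduction is genuinely different from the paper's and the part you carry out is sound. The paper works directly with $u-u_I$: it integrates $\int_\calP\nabla(u-u_I)\cdot\nabla(u-u_I-c)$ by parts (with $c$ the boundary average), bounds the flux term by the negative-order duality $\tri\nabla(u-u_I)\cdot{\bf n}\tri_{-1/2,\partial\calP}$ via \eqref{L:prel-3}--\eqref{L:prel-4}, and thereby reduces everything to the single boundary estimate \eqref{L:target}. You instead peel off $u-q_k$ (handled by \eqref{stimau-qk}) and split the residual VEM function $q_k-u_I$ into a zero-trace part, controlled by $\|\Delta w_0\|_{0,\calP}$ through Poincar\'e (your observation that $\Pi^0_{k-2}\Delta q_k=\Delta q_k$ is exactly the right trick), and a harmonic part, controlled by its boundary trace through the first bound of Lemma \ref{lemma8.1}. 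This avoids the negative-norm machinery entirely, at the price of an extra splitting; both routes funnel into the same bottleneck, namely an $H^{1/2}(\partial\calP)$ estimate of a piecewise-defined boundary function.

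That bottleneck is where your proof stops being a proof. You correctly identify that $|w|_{1/2,\partial\calP}$ cannot be obtained by summing edgewise seminorms, but you do not supply the mechanism; this is precisely the content of \eqref{L:target}--\eqref{L:tar2}, which is the longest part of the paper's argument. The paper's device is an $\eps$-upgrade: it first gets $\|u-u_I\|_{0,\partial\calP}\le Ch_\calP|u-u_I|_{1,\partial\calP}$ from the vanishing of $u-u_I$ at \emph{all} vertices, uses space interpolation to write $|\cdot|_{1/2,\partial\calP}\le Ch_\calP^\eps|\cdot|_{1/2+\eps,\partial\calP}$, and only then localizes to edges (legitimate at order $1/2+\eps$ for a continuous function), treating straight edges by one-dimensional interpolation plus a trace inequality and the curved edge by \eqref{L:prel-1} plus \eqref{stimau-qk}. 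Two further wrinkles in your version: your function $w=q_k-u_I$ does \emph{not} vanish at the vertices away from $\gamma$ (it equals $q_k-u$ there, as you note), so the vertex-vanishing Poincar\'e step must be replaced by an $L^\infty$ bound of $q_k-u$ at the Gauss--Lobatto nodes (available for $s\ge 2$, but it must be said); and your edge identity has a sign slip, since $(\I-\mathrm{interp})(q_k-u)=u_I-u$ while $w|_e=\mathrm{interp}(q_k-u)$ -- harmless for the estimate, but as written the formula is not what you need. With the boundary lemma supplied in the paper's style, your argument closes.
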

\begin{proof}
In the following we assume that $s$ is integer, the general bound can then be immediately obtained by using classical results of space interpolation theory.
As a consequence of the properties of the map  ${\bf F}_\calP$, by mapping to $B_\calP$, applying standard estimates and finally mapping back, one can easily obtain (for all $0 \le \eps < 1/2$)
\begin{equation}\label{L:prel-1}
\tri v \tri_{1/2+\eps,\partial \calP} \le C \tri v \tri_{1+\eps,\calP} \quad \forall v \in H^{1+\eps}(\calP) \: .
\end{equation}
Let now ${\bf n}$ denote the outward unit normal to $\calP$ and let $\gamma$ be the curved edge of $\calP$.
For any $\varphib \in H({\rm div};\calP)$ we easily have, also using the second bound in \eqref{L:prel-2}, 
\begin{equation}\label{L:prel-3}
\begin{aligned}
\tri \varphib\cdot{\bf n} \tri_{-1/2,\partial\calP} 
& := \sup_{w \in H^{1/2}(\partial\calP)}\frac{<\varphib\cdot{\bf n},w>}{\tri w \tri_{1/2,\partial\calP}}
= \sup_{v \in H^1(\calP), \Delta v = 0} \frac{<\varphib\cdot{\bf n},v|_{\partial\calP}>}{\tri v|_{\partial\calP} \tri_{1/2,\partial\calP}} \\
& \le C \sup_{v \in H^1(\calP), \Delta v = 0} \frac{<\varphib\cdot{\bf n},v|_{\partial\calP}>}{\tri v \tri_{1,\calP}} \: ,
\end{aligned}
\end{equation}
where (here and in the sequel) the brackets denote a duality on the boundary of $\calP$.
Therefore an integration by parts and trivial inequalities 
(note that $h_E \| v \|_{0,\calP} \le \tri v \tri_{1,\calP}$)
yield the uniform bound
\begin{equation}\label{L:prel-4}
\tri \varphib\cdot{\bf n} \tri_{-1/2,\partial\calP} \le
C  \!\!\!\! \sup_{v \in H^1(\calP), \Delta v = 0}  \!\!\!\!
\frac{\int_E \varphib\cdot\nabla v + \int_E v ({\rm div}\varphib)}{\tri v \tri_{1,\calP}}
\le C \big( \| \varphib \|_{0,\calP} + h_{\calP} \| {\rm div} \varphib \|_{0,\calP} \big) \: .
\end{equation}
We can now estimate the interpolation error. 
Let the constant $c$ be equal to the average of $u-u_I$ on $\partial\calP$.
First by integrating by parts, then recalling the definition of $u_I$, we obtain
$$
\begin{aligned}
| u-u_I |_{1,\calP}^2 & = \int_\calP \nabla (u-u_I) \cdot \nabla (u-u_I-c) \\
& = - \int_\calP (u-u_I-c) \Delta (u-u_I) + <\nabla(u-u_I)\cdot{\bf n},u-u_I-c> \\
& \le C \| u-u_I-c \|_{0,\calP} \| \Delta u - \Pi^0_{k-2}(\Delta u) \|_{0,\calP} \\
& + \tri \nabla(u-u_I)\cdot{\bf n} \tri_{-1/2,\partial\calP}\, 
\tri u-u_I-c \tri_{1/2,\partial \calP} \: .
\end{aligned}
$$
We now apply standard polynomial approximation estimates on star-shaped domains and bound \eqref{L:prel-4} to derive
\begin{equation*}
\begin{aligned}
| u-u_I |_{1,\calP}^2 & \le C h_\calP^{s-1} | u-u_I |_{1,\calP} | \Delta u |_{s-2,\calP}
+ \big( | u-u_I |_{1,\calP} + h_{\calP} \| \Delta (u-u_I) \|_{0,\calP} \big) \tri u-u_I-c \tri_{1/2,\partial \calP} .
\end{aligned}
\end{equation*}
The above bound, again recalling the definition of $\Delta u_I$ and by standard approximation estimates for polynomials, becomes 
\begin{equation}\label{L:est-2}
\begin{aligned}
| u-u_I |_{1,\calP}^2 & \le C h_\calP^{s-1} | u-u_I |_{1,\calP} | u |_{s,\calP}
+ \big( | u-u_I |_{1,\calP} + h_{\calP}^{s-1} | u |_{s,\calP} \big) \tri u-u_I-c \tri_{1/2,\partial \calP} .
\end{aligned}
\end{equation}
Due to bound \eqref{L:est-2}, it is now easy to check by some simple algebra that the proof is concluded
provided we can show the boundary approximation estimate
\begin{equation}\label{L:target}
\tri u-u_I-c \tri_{1/2,\partial \calP} \le C h_{\calP}^{s-1} | u |_{s,\calP} \: .
\end{equation}
First by definition, then by a one dimensional Poincar\'e inequality (from $L^2$ into $H^{1/2}$, recalling that $c$ is the average of $u-u_I$ on $\partial\calP$) we get
$$
\tri u-u_I-c \tri_{1/2,\partial \calP} = h_{\PP}^{-1/2} \| u-u_I-c \|_{0,\partial \calP} + | u-u_I|_{1/2,\partial \calP} \le C | u-u_I|_{1/2,\partial \calP} \ .
$$
Since the function $u-u_I$ vanishes at all vertexes of $\calP$, a direct argument easily yields
$$
\| u-u_I\|_{0,\partial \calP} \le C h_\calP | u-u_I|_{1,\partial \calP}
$$
and thus, by space interpolation theory,
\begin{equation}\label{L:est-5}
| u-u_I|_{1/2,\partial \calP} \le C h_\calP^\eps | u-u_I|_{1/2+\eps,\partial \calP}
\le C \sum_{e \in \partial\calP} h_\calP^\eps | u-u_I|_{1/2+\eps,e}
\end{equation}
with $0 < \eps \le 1/2$ and with $e\in \partial\calP$ denoting all the edges (curved and straight) of $\calP$.

Whenever $e$ in \eqref{L:est-5} is a straight edge, we apply standard interpolation results in one dimension and a trace inequality (for instance from $e$ into the triangle connecting the endpoints of $e$ with the center of the ball $B_\calP$). We obtain
\begin{equation}\label{L:tar1}
h_\calP^\eps | u-u_I|_{1/2+\eps,e} \le h_\calP^{s-1} C |u|_{s-1/2,e}
\le C h_\calP^{s-1} C |u|_{s,\calP} \: .
\end{equation}
Whenever $e=\gamma$ in \eqref{L:est-5} is a curved edge, we first recall the definition of $u_I$ on $\gamma$ and apply the trace inequality in \eqref{L:prel-1}, then make use of standard polynomial approximation estimates on star-shaped domains
\begin{equation}\label{L:tar2}
h_\calP^\eps | u-u_I|_{1/2+\eps,\gamma} \le 
h_\calP^\eps \tri u-u_I \tri_{1/2+\eps,\partial \calP} 
\le h_\calP^{-1} \| u-q_k \|_{0,\calP} + h_\calP^\eps | u-q_k|_{1+\eps,\calP}
\le h_\calP^{s-1} | u|_{s,\calP} \: .
\end{equation}
The proof is concluded by combining \eqref{L:tar1}-\eqref{L:tar2} into \eqref{L:est-5}, thus obtaining \eqref{L:target}.
\end{proof} 

We have the following corollary.
\begin{cor}\label{interp-corol}
Under the same notation and assumptions of Proposition \ref{interp-prop} and its proof, by choosing $u_\pi=q_k$, for all elements $\calP$ it holds
\begin{equation}\label{stimaS}
\begin{aligned}
\| \gu_I - \gu_\pi \|_{1,S(\calP)}^2 & :=
a_h^{\PP}(\gu_I - \gu_\pi , \gu_I - \gu_\pi) 
\le C h_\calP^{2(s-1)} |u|^2_{s,\calP} \: ,
\end{aligned}
\end{equation}
and thus
\begin{equation}\label{stimaSh}
\| \gu_I - \gu_\pi \|_{1,S} \le C h^{s-1} |u|_{s,\Omega} .
\end{equation}
\end{cor}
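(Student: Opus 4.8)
The plan is to estimate $a_h^{\PP}(\gv,\gv)$ directly from its definition \eqref{defahP}, where throughout I abbreviate $\gv:=\gu_I-\gu_\pi$, so that the associated function is $\fv=u_I-q_k$. The quantity $a_h^{\PP}(\gv,\gv)$ splits into the \emph{projection term} $\pippof_{|\PP}\,|\Pi^{\nabla}_k\fv|_{1,\PP}^2$ and the \emph{stabilization term} $\pippof_{|\PP}\sum_i\big(\delta_i((\I-\mathcal{G}\Pi^{\nabla}_k)\gv)\big)^2$. The first is immediate: choosing $q_k=\Pi^{\nabla}_k\fv$ in the first line of \eqref{defpinablak} shows that $\Pi^{\nabla}_k$ is a contraction in the $H^1$-seminorm, hence $|\Pi^{\nabla}_k\fv|_{1,\PP}\le|\fv|_{1,\PP}\le|u-u_I|_{1,\PP}+|u-q_k|_{1,\PP}$, and Proposition \ref{interp-prop} together with \eqref{stimau-qk} (with $r=1$, $t=s$) bound this by $C\,h_\PP^{s-1}|u|_{s,\calP}$, whence the projection term is $\le C\,\pippof^{*}h_\PP^{2(s-1)}|u|^2_{s,\calP}$.

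For the stabilization term I would first record two auxiliary bounds, $\|\fv\|_{0,\PP}\le C\,h_\PP^{s}|u|_{s,\calP}$ and $\|\Pi^{\nabla}_k\fv\|_{0,\PP}\le C\,h_\PP^{s}|u|_{s,\calP}$. The first follows from \eqref{newnew5}: one has $\|\fv\|_{0,\PP}\le C\big(h_\PP|\fv|_{1,\PP}+|\int_{\partial\PP}\fv|\big)$, where $h_\PP|\fv|_{1,\PP}$ is already controlled, while $\int_{\partial\PP}\fv=\int_{\partial\PP}(u_I-u)+\int_{\partial\PP}(u-q_k)$ is $O(h_\PP^{s}|u|_{s,\calP})$ by a scaled multiplicative trace inequality applied to $u-q_k$, by one-dimensional polynomial interpolation on the straight edges (where $u_I$ interpolates $u$ at $\pi_k$ nodes), and by $u_I=q_k$ on $\gamma$, cf. \eqref{uIgamma}. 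For the second bound I use that the second line of \eqref{defpinablak} gives $\int_{\partial\PP}\Pi^{\nabla}_k\fv=\int_{\partial\PP}\fv$, so that \eqref{newnew5} yields $\|\Pi^{\nabla}_k\fv\|_{0,\PP}\le C\big(h_\PP|\Pi^{\nabla}_k\fv|_{1,\PP}+|\int_{\partial\PP}\fv|\big)$, and both summands on the right are bounded as above.

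With these in hand, I split the index set of generators into (a) vertices of $\PP$, (b) Gauss--Lobatto nodes of the straight edges, (c) trace-generator points of $\gamma$, and (d) interior moment indices; by assumptions {\bf A}--{\bf B} each element carries a uniformly bounded number of generators, so it suffices to bound each term by $C\,h_\PP^{s-1}|u|_{s,\calP}$. For types (a)--(b): since $\gu_\pi=\mathcal{G}q_k$ and $u_I$ interpolates $u$ at those nodes, $\delta_i(\gv)=(u-q_k)(\nu_i)$, which is $O(h_\PP^{s-1}|u|_{s,\calP})$ by a scaled Sobolev embedding combined with \eqref{stimau-qk}; subtracting $\delta_i(\mathcal{G}\Pi^{\nabla}_k\gv)=(\Pi^{\nabla}_k\fv)(\nu_i)$ leaves a point value of the polynomial $\Pi^{\nabla}_k\fv$, controlled by $C\,h_\PP^{-1}\|\Pi^{\nabla}_k\fv\|_{0,\PP}$ via a polynomial inverse inequality. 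The genuinely new point is type (c): by the very definition \eqref{uInodi} of $\gu_I$ on $\gamma$, together with $\gu_\pi=\mathcal{G}q_k$, the corresponding components of $\gv$ \emph{vanish}, $\delta_i(\gv)=q_k(\nu_i)-q_k(\nu_i)=0$, so that $\delta_i((\I-\mathcal{G}\Pi^{\nabla}_k)\gv)=-(\Pi^{\nabla}_k\fv)(\nu_i)$; since the tg points lie, by construction, in a ball of radius $\sim h_\PP$ containing $\PP$ (which itself contains a ball of radius $\sim h_\PP$ by {\bf A}), a polynomial inverse inequality on comparable domains again gives $|(\Pi^{\nabla}_k\fv)(\nu_i)|\le C\,h_\PP^{-1}\|\Pi^{\nabla}_k\fv\|_{0,\PP}$. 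For type (d), $\delta_i((\I-\mathcal{G}\Pi^{\nabla}_k)\gv)=\int_\PP(\fv-\Pi^{\nabla}_k\fv)\,m_i$ with $\|m_i\|_{L^1}\simeq1$, which by Cauchy--Schwarz and an inverse estimate for the polynomial $m_i$ is $\le C\,h_\PP^{-1}\big(\|\fv\|_{0,\PP}+\|\Pi^{\nabla}_k\fv\|_{0,\PP}\big)$. Inserting the two auxiliary $L^2$ bounds closes all four cases and yields \eqref{stimaS}, with $C$ depending also on $\pippof^{*}=\max\{\pippoi,\pippoz\}$ and on $\newthetas,\newthetas_1,k$; summing over $\PP\in\Th$ and using $h_\PP\le h$ and $\|\gu_I-\gu_\pi\|_{1,S}^2=\sum_\PP a_h^{\PP}(\gv,\gv)$ gives \eqref{stimaSh}.

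The step I expect to be the real obstacle is the treatment of the trace-generator indices (case (c)): one must exploit precisely the special construction \eqref{uInodi}, which makes those components of $\gv$ identically zero, so that what survives is only a point value of $\Pi^{\nabla}_k\fv$ evaluated at a point \emph{external} to $\PP$ (but $O(h_\PP)$-close to it), and then dominate it through the auxiliary $L^2$ estimate for $\Pi^{\nabla}_k\fv$ — whose derivation, via the boundary-mean normalization in \eqref{defpinablak} and the lemma \eqref{newnew5}, has to be carried out without losing the optimal power of $h_\PP$. The remaining manipulations are standard VEM bookkeeping.
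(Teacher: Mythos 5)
Your proposal is correct and follows essentially the same route as the paper's (sketched) proof: the same consistency/stabilization split, the same key observation that the tgp components of $\gu_I-\gu_\pi$ vanish by construction \eqref{uInodi} so that only point values of the polynomial $\Pi^\nabla_k u_I - u_\pi$ survive there, the same inverse estimate on a ball comparable to $B_\calP$ containing the tgp, and the same auxiliary $L^2$ bounds closing the argument. The only difference is organizational (you bound each generator type separately, while the paper bounds all pointwise evaluations at once through a single $L^\infty$ estimate), which does not change the substance.
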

\begin{proof}
We only show the sketch of the proof.
We start by noting that \eqref{stimau-uI} combined with a scaled Poincar\'e-type inequality easily yields
\begin{equation}\label{bound-L2}
\| u - u_I \|_{0,\calP} \le 
C h_\calP^{s} |u|_{s,\calP} \: .
\end{equation}
By definition, the norm $\| \cdot \|_{1,S}$ is split into a consistency term and a stabilization term
$$
\begin{aligned}
\| \gu_I - \gu_\pi \|_{1,S(\calP)}^2 & =
\int_{\PP} \pippof \nabla \Pi^{\nabla}_k(u_I - u_\pi)\cdot\nabla\Pi^{\nabla}_k(u_I - u_\pi)\,{\rm d}\PP \\
& + \S^{\PP}((\I-{\mathcal G}\Pi^{\nabla}_k)(\gu_I - \gu_\pi),(\I-{\mathcal G}\Pi^{\nabla}_k)(\gu_I - \gu_\pi)) .
\end{aligned}
$$
The estimate for the first term follows easily from \eqref{stimau-qk}, \eqref{stimau-uI} and the continuity of the $\Pi^\nabla_k$ operator.
Adopting definition \eqref{dofi-dofi}, the second term can be split into a part that involves the volume moments and a part that involves a sum of pointwise evaluations, including the $tgp$. Since the novelty here is in the second part (the first one is standard, as for straight polygons) we focus only on the latter term.
Let ${\mathcal N}$ denote the set of the boundary nodes of $\calP$ plus the tgp nodes. Recalling that $\gu_I$ and $\gu_\pi$ attain the same values on the $tgp$, we thus need to estimate (also using $u_I(\nu)=u(\nu)$ for all nodes $\nu \in {\mathcal N}, \nu \notin tgp$)
$$
\begin{aligned}
& \sum_{\nu \in {\mathcal N}} 
((\I-{\mathcal G}\Pi^{\nabla}_k)(\gu_I - \gu_\pi)(\nu))^2 = \sum_{\nu \in {\mathcal N}}((\gu_I - \gu_\pi)(\nu)-(\Pi^{\nabla}_k u_I - u_\pi)(\nu))^2\\
&\le 2\Big( \sum_{\nu \in {\mathcal N}}((\gu_I - \gu_\pi)(\nu))^2 +\sum_{\nu \in {\mathcal N}}(\Pi^{\nabla}_k u_I - u_\pi)(\nu))^2\Big)\\
& =
2  \sum_{\nu \in {\mathcal N}, \nu \notin tgp} ((u - u_\pi)(\nu))^2+
2 \sum_{\nu \in {\mathcal N}} ((\Pi^\nabla_k u_I - u_\pi)(\nu))^2 .
\end{aligned}
$$
The first term above is bounded by standard polynomial approximation estimates in $L^\infty$. In order to deal with the second term, let $\widetilde{B}$ be the smallest ball that contains $\calP$ and all the $tgp$; note that by definition of the $tgp$ the radius of such ball is uniformly comparable to that of $B_\calP$, see assumption {\bf A}.
As a consequence, from known properties of polynomial functions and an inverse estimate,
$$
\begin{aligned}
\sum_{\nu \in {\mathcal N}} ((\Pi^\nabla_k u_I - u_\pi)(\nu))^2 & \le
\|\Pi^\nabla_k u_I - u_\pi \|^2_{L^\infty(\widetilde{B})} \\
& \le C \|\Pi^\nabla_k u_I - u_\pi\|^2_{L^\infty(\calP)} 
\le C h_\calP^{-2} \|\Pi^\nabla_k u_I - u_\pi\|^2_{0,\calP} .
\end{aligned}
$$
Now the result follows by adding and subtracting $u$ and $\Pi^\nabla_k u$, using the triangle inequality, bound \eqref{bound-L2}, stability properties of $\Pi^\nabla_k$ and standard polynomial approximation estimates.
\end{proof}

\subsection{Construction of $u_I$ for discontinuous $\pippof\nn\cdot\nabla u$}

In the case where $u$ is continuous but not $C^1$ across $\gamma$ (that is, when $\pippof$ has two different values across $\gamma$), then we have to stabilize $a_h^{\calP}$ in only one of the two elements $\calP$ and $\calP^{\prime}$.
Let's say that we stabilize in $\calP$. Then the $q_k$ will
be chosen using only the values of $u$ in $\calP$ and the estimate \eqref{stimau-qk} will be required only in $\calP$. 
 
In $\calP^{\prime}$ we will choose $u_{\pi}$ as 
the $L^2(\calP^{\prime})$-projection of $u$ onto $\Po_k$. In turn, $u_I$ will still be equal to $q_k$ on $\gamma$ (it must be continuous) and will be defined, on the other edges different from $\gamma$, using the values of $u$ itself (and, inside, using  $\Delta u_I=\Pi_{k-2}^{0,\PP^\prime}(\Delta u$).

Using the fact that the estimate \eqref{stimau-qk} still holds on $\gamma$, we can deal with the estimate of 
$u-u_I$ proceeding as before. This will give us, intercalating $u$,   an estimate on 
$$\|u_I-u_{\pi}\|_{1,\PP^{\prime}}.$$ 
 Next, since we {\bf do not} stabilize the tgp degrees of freedom on 
$\calP^{\prime}$, this will also provide a bound for     
$$\|\gu_I-\gu_{\pi}\|_{1,S},$$ proceeding (for the other degrees of freedom) as for the usual VEMs
and we are done. We collect all this in the following proposition

\begin{prop}\label{onesided} If all elements of the decomposition satisfy assumption {\bf A}, using a one-sided stabilization
as described above we still have the estimate \eqref{stimaSh}
\end{prop}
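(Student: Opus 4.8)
The plan is to reduce Proposition \ref{onesided} to the estimate already obtained in Corollary \ref{interp-corol}, paying attention only to the tgp degrees of freedom on the curved interface $\gamma$, which are now handled asymmetrically. First I would recall the setup: on the ``stabilizing'' element $\calP$ we use exactly the construction of Section \ref{sec:Details} and of the Standard Case, i.e. $q_k$ on $\calP$ satisfying \eqref{stimau-qk} (with $\widetilde\calP=\calP$, since $u$ is only $H^{3/2-\eps}$ across $\gamma$ and we may \emph{not} use a two-sided neighbourhood), $u_\pi\equiv q_k$ on $\calP$, and $u_I$ defined on $\calP$ exactly as in \eqref{uIgamma}, \eqref{uInodi}, with $\Delta u_I=\Pi^{0,\calP}_{k-2}(\Delta u)$ inside. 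For this element the entire argument of Proposition \ref{interp-prop} and Corollary \ref{interp-corol} applies verbatim, since it only ever used regularity of $u$ on $\calP$ itself (the curved-edge term \eqref{L:tar2} was bounded using $\|u-q_k\|$ norms on $\calP$, not on $\widetilde\calP$); so we obtain $\|\gu_I-\gu_\pi\|_{1,S(\calP)}\le C h_\calP^{s-1}|u|_{s,\calP}$ with the full set of dofi-dofi terms, tgp included.

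Next I would treat the ``non-stabilizing'' element $\calP'$. Here $u_\pi$ is the $L^2(\calP')$-projection $\Pi^{0,\calP'}_k u$, and $u_I$ is fixed by: $u_I=q_k$ on $\gamma$ (forced by continuity with $\calP$), $u_I(\nu)=u(\nu)$ at the vertices and Gauss--Lobatto nodes of the straight edges of $\calP'$, and $\Delta u_I=\Pi^{0,\calP'}_{k-2}(\Delta u)$ inside. The key point is that the proof of Proposition \ref{interp-prop} still yields $|u-u_I|_{1,\calP'}\le C h_{\calP'}^{s-1}|u|_{s,\calP'}$: the only place the interpolant's boundary value on $\gamma$ entered was \eqref{L:tar2}, and there we only used that $u_I|_\gamma$ equals the trace of a polynomial $q_k$ for which \eqref{stimau-qk} holds on the element adjacent to $\gamma$. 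Since \eqref{stimau-qk} holds on $\calP$ and the curve $\gamma$ is shared, the boundary approximation estimate $|u-u_I|_{1/2+\eps,\gamma}$ is controlled by $\|u-q_k\|$-type norms on $\calP$, hence by $h^{s-1}(|u|_{s,\calP}+|u|_{s,\calP'})$; combined with \eqref{L:tar1} on the straight edges of $\calP'$ we get \eqref{L:target} on $\partial\calP'$ and therefore \eqref{stimau-uI} on $\calP'$ (with the local constant now possibly involving $|u|_{s,\calP}$ as well, which is harmless after summation). From here, intercalating $u$, $\|u_I-u_\pi\|_{1,\calP'}\le\|u_I-u\|_{1,\calP'}+\|u-u_\pi\|_{1,\calP'}\le C h_{\calP'}^{s-1}|u|_{s,\widetilde\calP}$ by the above and by standard $L^2$-projection estimates.

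Then I would assemble the stabilization norm on $\calP'$. Because we \emph{do not} stabilize the tgp degrees of freedom on $\calP'$, the dofi-dofi sum \eqref{dofi-dofi} restricted to $\calP'$ runs only over vertices, Gauss--Lobatto nodes of straight edges, and interior moments --- exactly the degree-of-freedom set of a \emph{genuine} VEM space (no idle generators), so the analysis is identical to that for ordinary polygons. Thus $\|\gu_I-\gu_\pi\|_{1,S(\calP')}^2$ splits into the $\Pi^\nabla_k$ consistency term, bounded by $\|u_I-u_\pi\|_{1,\calP'}^2$ and continuity of $\Pi^\nabla_k$, plus the dofi-dofi term over that restricted node set, bounded (as in Corollary \ref{interp-corol}, via an inverse inequality and \eqref{bound-L2} applied on $\calP'$) by $C h_{\calP'}^{-2}\|\Pi^\nabla_k u_I-u_\pi\|_{0,\calP'}^2\le C h_{\calP'}^{2(s-1)}|u|_{s,\widetilde\calP}^2$. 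Adding the $\calP$ and $\calP'$ contributions and summing over all elements (noting each tgp on a material-jump interface is counted in exactly one element, and elsewhere everything is as in the Standard Case) yields \eqref{stimaSh}.

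The main obstacle I anticipate is verifying carefully that the boundary estimate \eqref{L:target} still goes through on $\calP'$ even though $u_I|_\gamma=q_k|_\gamma$ is built from data on the \emph{other} element: one must track that the bound \eqref{L:tar2} uses $\|u-q_k\|_{0,\calP}$ and $|u-q_k|_{1+\eps,\calP}$ (hence $|u|_{s,\calP}$) rather than norms on $\calP'$, so the resulting local estimate on $\calP'$ legitimately depends on $|u|_{s,\calP}$; this is acceptable because after summation over the mesh each interface contributes to at most two elements. A secondary technical point is that $u$ has only $H^{3/2-\eps}$ regularity \emph{across} $\gamma$, but every estimate above is performed elementwise with $u\in H^s(\calP)$ and $u\in H^s(\calP')$ separately ($s\ge 2$), so the limited global regularity never enters --- precisely the reason the one-sided stabilization is introduced. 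Everything else is a routine repetition of the Standard-Case arguments.
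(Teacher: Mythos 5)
Your proposal is correct and follows essentially the same route as the paper's own (much terser) argument: choose $q_k$ from data on $\calP$ only, take $u_\pi=\Pi^{0,\calP'}_k u$ on $\calP'$, observe that the boundary bound on $\gamma$ is obtained through norms of $u-q_k$ on $\calP$, intercalate $u$ to control $\|u_I-u_\pi\|_{1,\calP'}$, and use the absence of tgp stabilization on $\calP'$ to reduce the dofi-dofi term there to the standard polygonal case. Your explicit tracking of which element's Sobolev norms appear in the cross-interface estimate is a useful elaboration of what the paper leaves implicit.
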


\subsection{Estimate of ${\mathcal E}_1(f)$}

For $k=1$, using the definition \eqref{per-fh} and noting that for any elements $\calP$ and all $v \in H^1(\calP)$ it holds $\int_{\partial\calP} (v- \Pi^{\nabla}_1 v) = 0$ by definition, using bound \eqref{newnew6} we derive
\begin{equation}
(f,v-\T v)_0=\sum_{\PP}(f,v- \Pi^{\nabla}_1 v)_{0,\PP} \le C\sum_{\PP} \|f\|_{0,\PP} \, h_{\PP} |v- \Pi^{\nabla}_1 v|_{1,\PP} \le C h \|f\|_0 |v|_1,
\end{equation}
giving
\begin{equation}\label{Errfk1}
{\mathcal E}_1(f)\le C\,h \|f\|_{0} \qquad \mbox{for } k = 1.
\end{equation}
For $k\ge 2$, from the properties of the $L^2$-orthogonal projection and standard interpolation estimates we have 
\begin{align}
(f,v-\T v)_0&=\sum_{\PP}(f,v- \Pi^{0,\PP}_{k-2} v)_{0,\PP}= \sum_{\PP}(f-\Pi^{0,\PP}_{k-2} f,v- \Pi^{0,\PP}_{k-2} v)_{0,\PP} \\
&\le C\sum_{\PP} h^{k-1}_{\PP}\|f\|_{k-1,\PP}\, h_{\PP} |v|_{1,\PP}
\le C h^k (\sum_{\PP}\|f\|^2_{k-1, \PP})^{1/2} \|v\|_1,
\end{align}
implying
\begin{equation}\label{Errfk}
{\mathcal E}_1(f)\le C\,h^k (\sum_{\PP}\|f\|^2_{k-1, \PP})^{1/2} \qquad \mbox{for } k\ge 2.
\end{equation}

\subsection{The final error estimate}
At this point, we just have to join the error estimate  \eqref{ErrEst} with the results of this section, and in particular \eqref{stimau-uI}, \eqref{stimaSh} (taking also into account Proposition \ref{onesided})  and \eqref{Errfk1} or
\eqref{Errfk} we have  
\begin{equation}  
\|u-\fuh\|_1\le  Ch^k\Big(   (\sum_{\PP}  |u|_{k+1,\PP}^2 +  \|f\|^2_{s, \PP})^{1/2}\Big)^{1/2}
\end{equation}
where $s=1$ for $k=1$ and $s=k-1$ for $k\ge 2$.

\end{document}